% FILE: hypdia20210219.tex
% Figures made with MATLAB scripts as follows:
% USES: 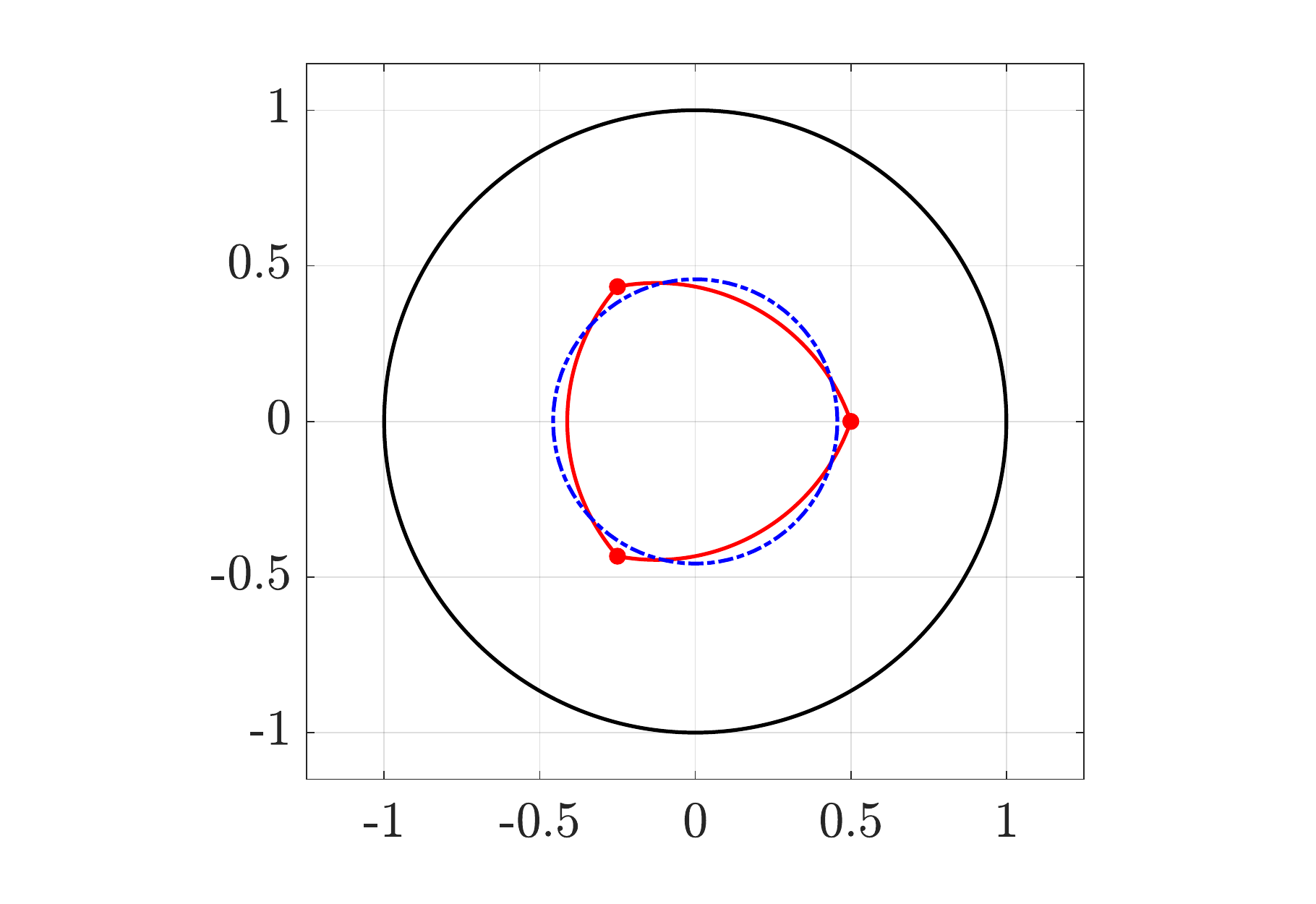     output of makefigA.m
%       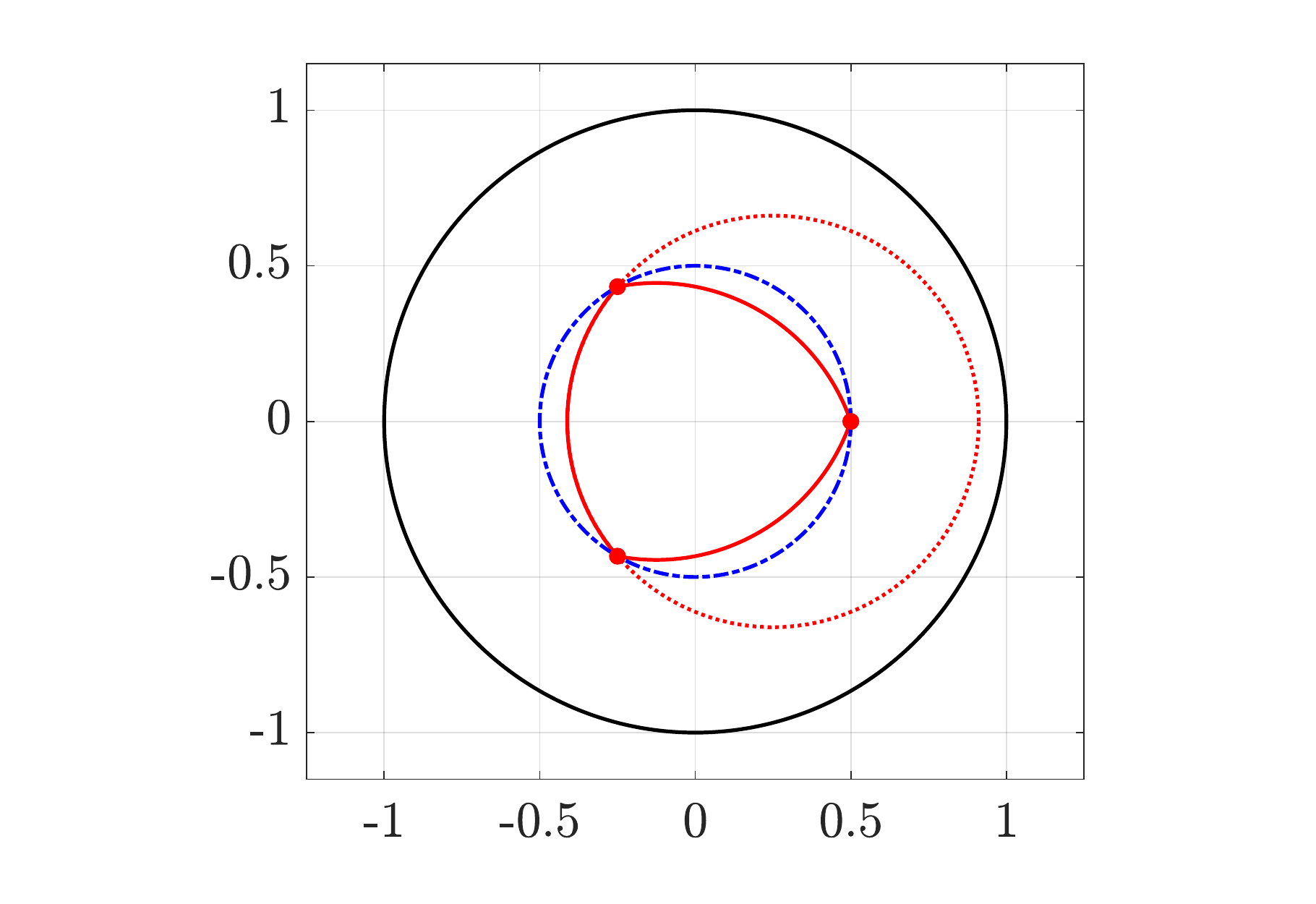.   output of makefigB.m
%       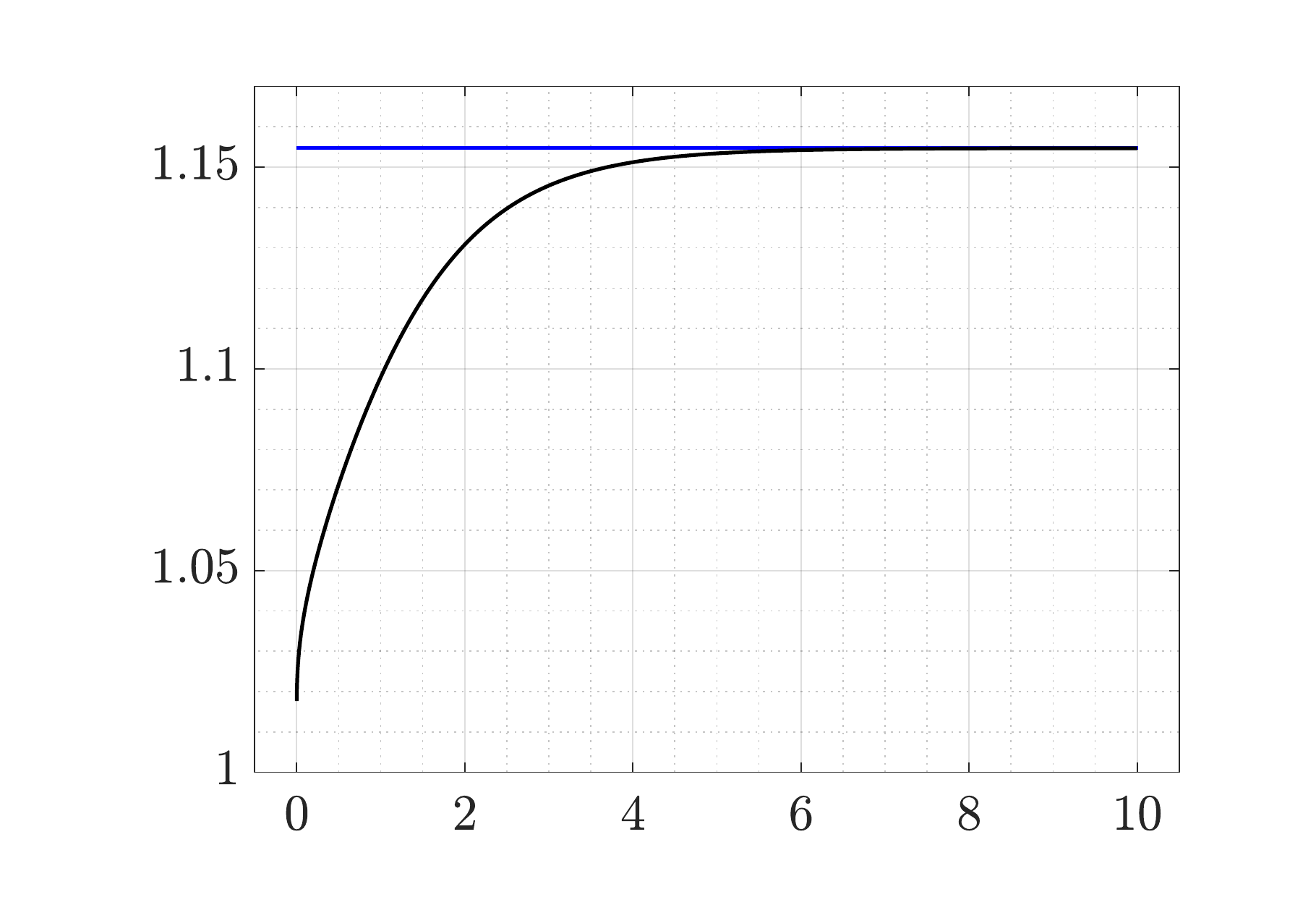                output of makefigC.m
% Fig3, right: 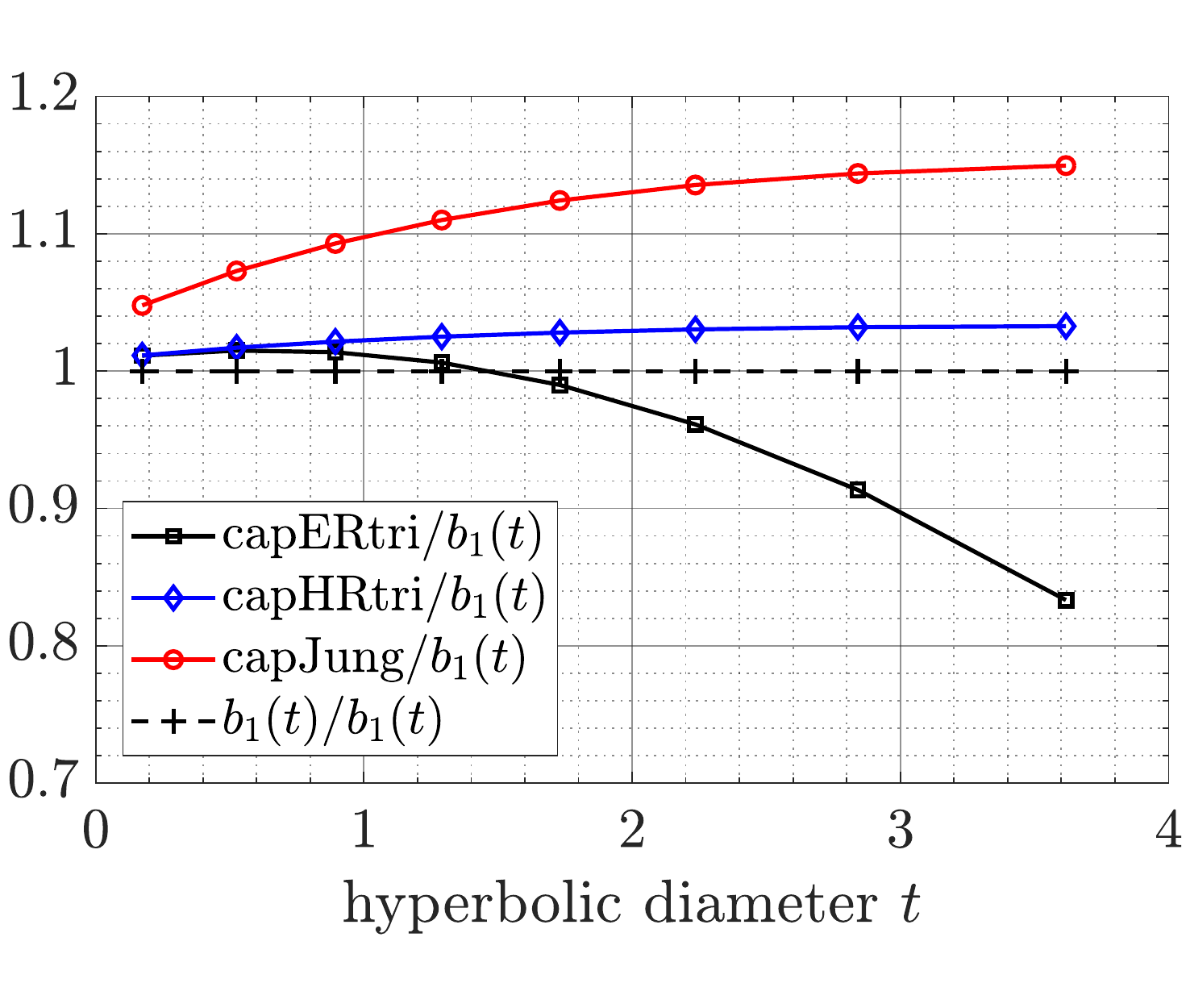   output of makefigD.m
% Fig4 right: 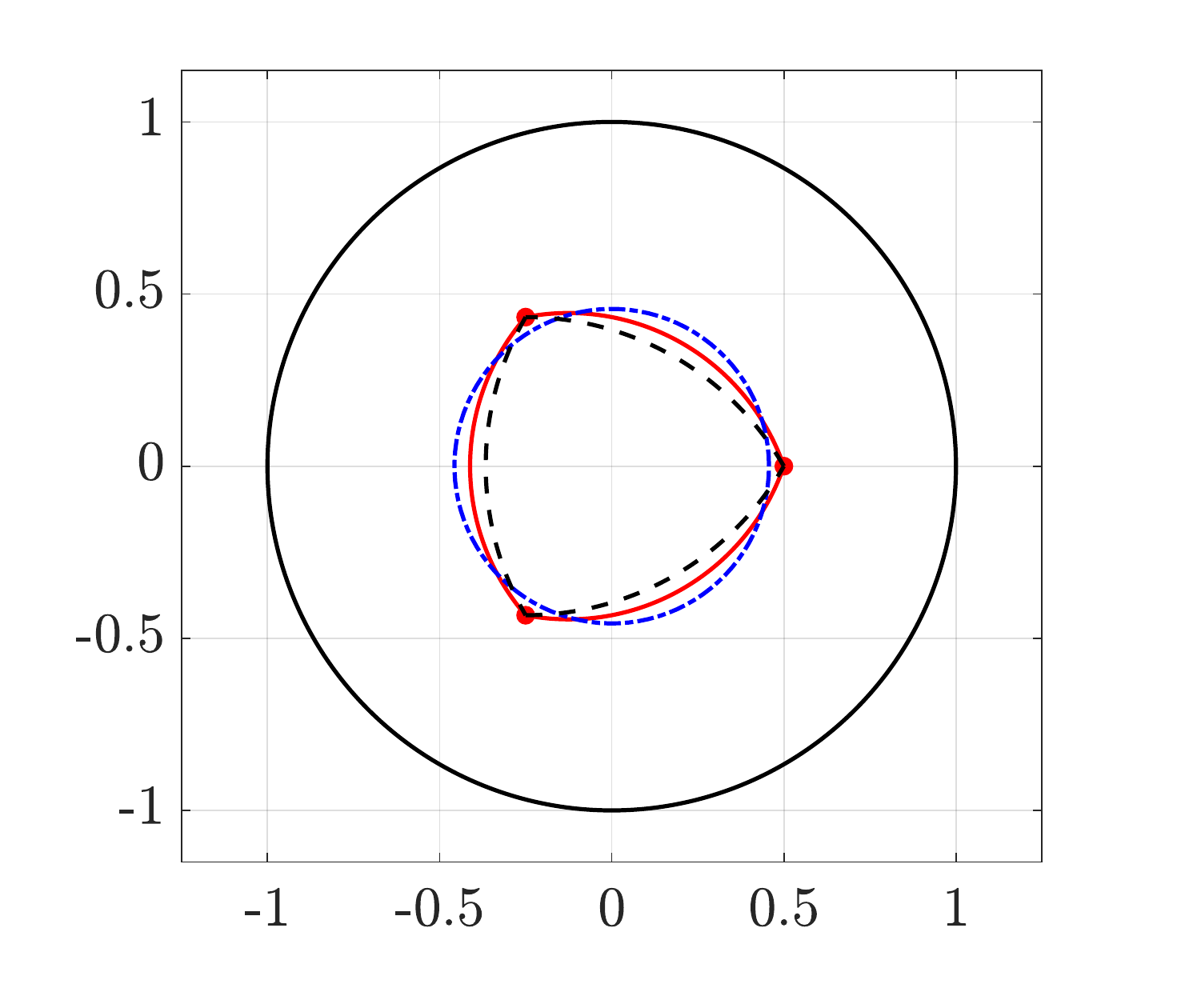     output of makefigE.m
%             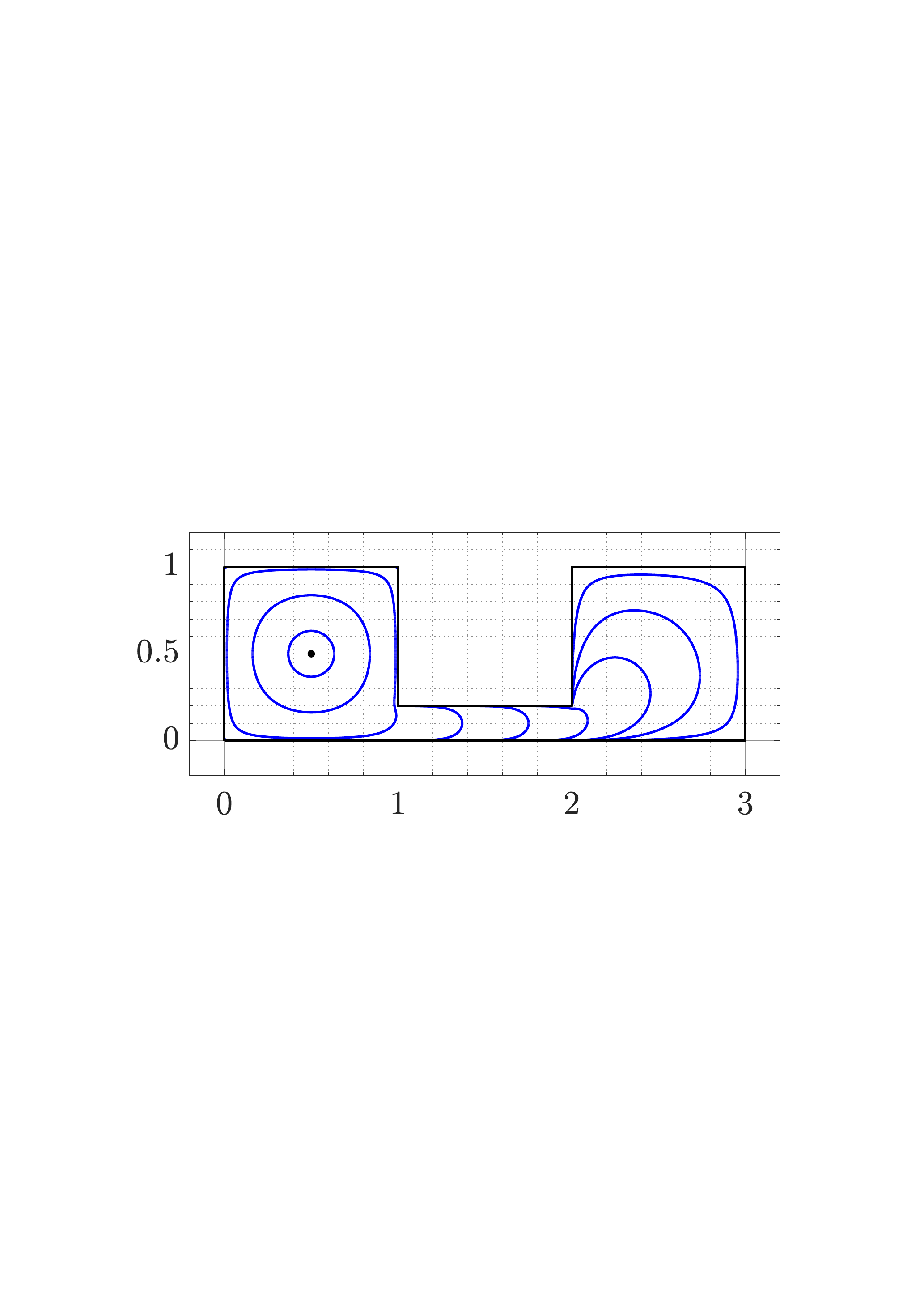
%%             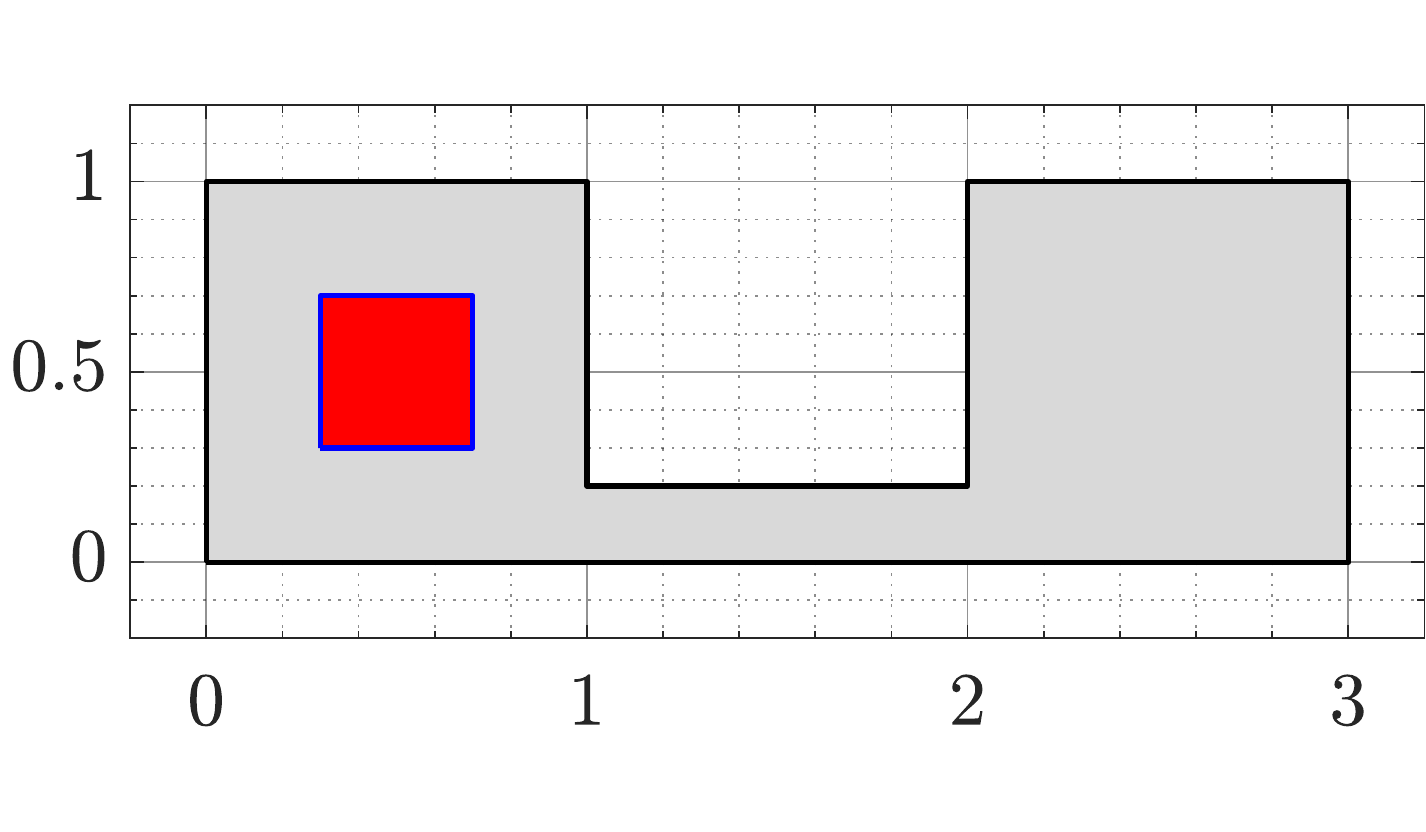   
% Fig4 left:                          output of maketableA.m
% Table 1                             output of maketableA.m
\documentclass[12pt]{amsart}
\usepackage{amsmath,amssymb,amsthm}

\usepackage{ifthen,verbatim}
\usepackage{mathrsfs}
\usepackage{color}
\usepackage{url}

%%%%FROM ccc-iso:
\usepackage[english]{babel}
\usepackage{verbatim,here}
\usepackage[T1]{fontenc}
\usepackage{floatflt,graphicx,graphics}
\usepackage{a4wide}
%%%%%%%%%%%%%%%%%%%%%%%

%\renewcommand{\baselinestretch}{1.2}

\numberwithin{equation}{section}
\setlength{\textwidth}{17cm}
\setlength{\textheight}{21.5cm}
\setlength{\oddsidemargin}{0cm}
\nonstopmode
\setlength{\evensidemargin}{0cm}
\setlength{\footskip}{40pt}

\theoremstyle{plain}

\newtheorem{corollary}[equation]{Corollary}

\newtheorem{theorem}[equation]{Theorem}

\newtheorem{lemma}[equation]{Lemma}

\newtheorem{proposition}[equation]{Proposition}

\theoremstyle{definition}
\newtheorem{rem}[equation]{Remark}
\newtheorem{remark}[equation]{Remark}

\newtheorem{example}[equation]{Example}

\newtheorem{nonsec}[equation]{}

{\qed\bigskip}

\newcounter{alphabet}

\newcommand{\be}{\begin{eqnarray}}
\newcommand{\ee}{\end{eqnarray}}
\newcommand{\ba}{\begin{array}}
\newcommand{\ea}{\end{array}}
\newcommand{\ben}{\begin{eqnarray*}}
\newcommand{\een}{\end{eqnarray*}}

\newcommand{\C}{{\mathbb C}}
\newcommand{\R}{{\mathbb R}}

\newcommand{\B}{\mathbb{B}}

\newcommand{\capa}{\mathrm{cap}\,}

\newcommand{{\tth}}{\mathrm{th}}

\newcommand{\K}{\mathcal{K}}

\newcommand{\tnh}{\mathrm{th}}

% the extended Euclidean n-space
%\newcommand {\Sn} {{\bar{\mathbb R}^n}}% the extended Euclidean n-space

\newcommand {\M} {\mathsf{M}}

%%%%%%%%%%%%%%%%%%%%%%%%
\renewcommand{\i}{\mathrm{i}}
\newcommand{\bs}{{\bf s}}

%%%%%%%%%%%%%%%%%%%%%^^^

%%%%%KAUNIS K  \K %%%%%%%%%%%%%%
\font\fFt=eusm10 %scaled 1200
\font\fFa=eusm7  %scaled 1200
\font\fFp=eusm5  %scaled 1200
\def\K{\mathchoice
 %%%displaystyle
{\hbox{\,\fFt K}}
%%%%textstyle
{\hbox{\,\fFt K}}
%%%scriptstyle
{\hbox{\,\fFa K}}
%%%%scriptscriptstyle
{\hbox{\,\fFp K}}}
%%%%%%%%%%%%%%%%%%%%%%%%%%%%%%%%%

%%%%%%%%%%%% METHOD FOR HOUR AND MINUTE %%%%%%%%%%%%%
\newcounter{minutes}\setcounter{minutes}{\time}
\divide\time by 60
\newcounter{hours}\setcounter{hours}{\time}
\multiply\time by 60
\addtocounter{minutes}{-\time}
%%%%%%%%%%%%%%%%%%%%%%%%%%%%%%%%%%%%%%%%%%%%%%%%%%%%%

\begin{document}

\bibliographystyle{amsplain}
\title%[]
{
Condenser Capacity and Hyperbolic Diameter
}

\def\thefootnote{}
\footnotetext{
\texttt{\tiny File:~\jobname .tex,
          printed: \number\year-\number\month-\number\day,
          \thehours.\ifnum\theminutes<10{0}\fi\theminutes}
}
\makeatletter\def\thefootnote{\@arabic\c@footnote}\makeatother

\author[M.M.S. Nasser]{Mohamed M. S. Nasser}
\address{Mathematics Program, Department of Mathematics, Statistics and Physics, College of Arts and Sciences, Qatar University, 2713, Doha, Qatar}
\email{mms.nasser@qu.edu.qa}
\author[O. Rainio]{Oona Rainio}
\address{Department of Mathematics and Statistics, University of Turku, FI-20014 Turku, Finland}
\email{ormrai@utu.fi}
\author[M. Vuorinen]{Matti Vuorinen}
\address{Department of Mathematics and Statistics, University of Turku, FI-20014 Turku, Finland}
\email{vuorinen@utu.fi}

\keywords{Boundary integral equation, condenser capacity, hyperbolic geometry, isoperimetric inequality, Jung radius, Reuleaux triangle.}
\subjclass[2010]{Primary 30C85, 31A15; Secondary 65E10}
\begin{abstract}
Given a compact connected set $E$ in the unit disk $\mathbb{B}^{2}$, we
give a new upper bound for the conformal capacity of the condenser
$(\mathbb{B}^{2}, E)$ in terms of the hyperbolic diameter $t$ of $E$.
Moreover, for $t>0$, we construct a set of hyperbolic diameter $t$ and
apply novel numerical methods to show that it has larger capacity than
a hyperbolic disk with the same diameter. The set we construct is called a Reuleaux triangle in hyperbolic geometry and it has
constant hyperbolic width equal to $t$.
\end{abstract}
\maketitle

\textbf{Data availability statement.}
All the data used in the research for this article was created with MATLAB codes available in GitHub at \url{github.com/mmsnasser/hypdiam}.

%%%%%%%%%%%%%%%%%%%%%%%%%%%%%%%%%%%%%%%%%
%%%%%%%%%%%%%%%%%%%%%%%%%%%%%%%%%%%%%%%%%
%%%%%%%%%%%%%%%%%%%%%%%%%%%%%%%%%%%%%%%%%
\section{Introduction}
%%%%%%%%%%%%%%%%%%%%%%%%%%%%%%%%%%%%%%%%%
%%%%%%%%%%%%%%%%%%%%%%%%%%%%%%%%%%%%%%%%%
%%%%%%%%%%%%%%%%%%%%%%%%%%%%%%%%%%%%%%%%%
One of the famous problems of geometry is the problem of maximizing the volume of a geometric body given its surface area. This \emph{isoperimetric problem} is a constrained extremal problem connecting two domain functionals, the volume and the surface area of the domain in question. Other than this specific question, there are several kinds of constrained extremal problems motivated by geometry and mathematical physics that can be referred to as isometric problems. 

Already seventy years ago, G. P\'olya  and G. Szeg\"o studied isoperimetric problems in their famous book \cite{ps}, which inspired numerous later authors. They specifically devoted a lot of attention to isoperimetric problems involving condenser capacity. Condenser capacities are important tools in the study of
partial differential equations, Sobolev spaces, integral inequalities, potential theory,
see V. Maz\'{}ya \cite{m}
and J. Heinonen, T. Kilpel\"ainen, and O. Martio \cite{hkm}. Since the extremal situations for the isoperimetric problems often reflect symmetry, various symmetrization procedures can be used as a method for analysing isoperimetric problems, see A. Baernstein \cite{bae}. Furthermore,  in his pioneering work \cite{du}, V.N. Dubinin systematically developed capacity related methods
and  gave numerous applications of condenser capacities and symmetrization methods to classical
function theory.  Capacity is also one of the key techniques in the theory of quasiconformal and
quasiregular maps in the plane and space \cite{GMP,HKV,res,rick}.

An open, connected and non-empty set $G$ is called {\it a domain} and if $E\subset G$ is a compact non-empty set, then the pair $(G,E)$ is {\it a condenser}. {\it The capacity} of this condenser is defined by
\begin{align}\label{def_condensercapacity}
{\rm cap}(G,E)=\inf_u\int_{G}|\nabla u|^n dm,
\end{align}
where the infimum is taken over the set of all $C^\infty_0(G)$ functions $u: G\to[0,\infty)$ with
$u(x) \ge 1$ for all $x \in E$ and $dm$ is the $n$-dimensional Lebesgue measure. Below, we often choose $n=2$ and focus on the special case where $G\subsetneq\R^2$ is simply connected and $E$ is a continuum.

By classical results the capacity decreases under a geometric transformation called
symmetrization  \cite[Ch 6, p.215]{bae}, \cite{du}, \cite[Thm 5.3.11]{GMP},
\begin{align}\label{cap_symlowerbound}
{\rm cap}(G,E)\geq{\rm cap}(G_s,E_s),    
\end{align}
where $(G_s,E_s)$ is the condenser obtained by one of the well-known symmetrization procedures, such as the spherical symmetrization or the Steiner symmetrization. While finding the explicit formula for the capacity \eqref{def_condensercapacity} is usually impossible, the lower bound \eqref{cap_symlowerbound} can be often estimated or given explicitly \cite{du}, \cite[pp.180-181]{GMP}, \cite[Chapter 9]{HKV}. 

Our aim in this paper is to find upper bounds for the condenser capacity, when $n=2$, 
$G$ is a simply connected domain, and $E$ is a connected compact set. Numerous bounds are given in the literature in terms of domain functionals, such as the area of $G$, the diameter of $E$ and the distance from $E$ to the boundary $\partial G$ \cite{du,GMP,HKV,m,cchp,res,rick}. While these kinds of bounds have numerous applications as shown in the cited sources, these bounds do not reflect the conformal invariance of ${\rm cap}(G,E)$. We apply the conformally invariant hyperbolic metric in this paper and therefore our main results are conformally invariant.

By the conformal invariance of the capacity and the hyperbolic metric, we may assume without loss of generality that the domain $G$ is the unit disk $\B^2$ in the two-dimensional plane $\C=\R^2$. Naturally, we use here the  Riemann mapping theorem \cite{BM}. After this preliminary reduction, we look for upper bounds for the condenser capacity ${\rm cap}(\B^2,E)$ in terms of the hyperbolic metric $\rho_{\B^2}$ of the unit disk, when the hyperbolic diameter $\rho_{\B^2}(E)$ of the  compact set $E$ is fixed.

A first guess might be that for a compact set $E \subset \B^2\,,$ a majorant for ${\rm cap}(\B^2,E)$ would be the capacity of a hyperbolic disk with the hyperbolic diameter equal to that of $E$. This guess is motivated by a measure-theoretic isodiametric inequality, see Remark \ref{rmk_1guess}. However, the main result of this paper
is to show that this guess is wrong. For this purpose, we introduce the so-called hyperbolic Reuleaux triangle, which is a set of constant hyperbolic width, and then compute its conformal 
capacity with novel computational methods \cite{LSN17,Nas-ETNA,nvs,nv} to confirm our claim.
% However, we will demonstrate below in  terms of a set with constant hyperbolic width, 
%the so-called hyperbolic Reuleaux triangle, that this guess is wrong.
A valid upper bound for $\capa(\B^2,E)$ in terms of the hyperbolic diameter
is instead naturally  given in terms of the capacity of the minimal hyperbolic disk 
containing the set $E$ and here we apply the  work of  B.V. Dekster in \cite{de}
who found this  minimal radius.

\begin{theorem}\label{thm_1.3}
For a continuum $E\subset\B^2$ with the hyperbolic diameter equal to $t>0$, the inequality
\begin{align}\label{ine_in1.3}
{\rm cap}(\B^2,E) \leq\frac{2\pi}{\log((1+\sqrt{1+v^2})\slash v)}
\quad\text{with}\quad
v=(2/{\sqrt{3}}) \,{\rm sh}({t}/{2})\,. 
\end{align}
holds. 
\end{theorem}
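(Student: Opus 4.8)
The plan is to bound the capacity from above by replacing $E$ with a larger set whose capacity we can compute exactly. The key monotonicity principle is that capacity is monotone with respect to inclusion of the compact set: if $E \subset F \subset \B^2$, then ${\rm cap}(\B^2,E) \le {\rm cap}(\B^2,F)$. This follows directly from the definition \eqref{def_condensercapacity}, since any admissible function for the condenser $(\B^2,F)$ is automatically admissible for $(\B^2,E)$. Therefore, if I can enclose $E$ inside a hyperbolic disk and then compute the capacity of that hyperbolic disk, I obtain an upper bound.

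First I would invoke the result of Dekster \cite{de} on the minimal enclosing ball in hyperbolic geometry: a set of hyperbolic diameter $t$ is contained in a hyperbolic disk whose radius $R$ is the hyperbolic analogue of the classical Jung radius. In Euclidean geometry Jung's theorem gives an enclosing-ball radius of $(\text{diam}/\sqrt{3})$ in the plane, and the factor $2/\sqrt{3}$ appearing in the definition of $v$ strongly suggests that Dekster's hyperbolic Jung radius $R$ satisfies ${\rm sh}(R) = (2/\sqrt{3})\,{\rm sh}(t/2)$, i.e. $v = {\rm sh}(R)$. So the middle step is to identify $v$ precisely as the hyperbolic sine of the minimal enclosing radius given by Dekster's formula.

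Next I would compute the conformal capacity of the condenser $(\B^2, \overline{D})$ where $\overline{D}$ is a closed hyperbolic disk of hyperbolic radius $R$. By the conformal invariance of capacity and of the hyperbolic metric, I may assume the hyperbolic disk is centered at the origin; a hyperbolic disk of hyperbolic radius $R$ centered at $0$ is a Euclidean disk of Euclidean radius $r = {\rm th}(R/2)$. The capacity of the annular condenser $(\B^2, \{|z|\le r\})$ is the capacity of the ring $r < |z| < 1$, which is classically $2\pi/\log(1/r)$. Thus the upper bound becomes $2\pi/\log(1/{\rm th}(R/2))$, and the remaining task is purely computational: verify that $1/{\rm th}(R/2) = (1+\sqrt{1+v^2})/v$ with $v = {\rm sh}(R)$. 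Using the half-angle identity ${\rm th}(R/2) = {\rm sh}(R)/(1+{\rm ch}(R)) = v/(1+\sqrt{1+v^2})$ (since ${\rm ch}(R) = \sqrt{1+{\rm sh}^2(R)} = \sqrt{1+v^2}$) makes this identity immediate.

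The main obstacle is the middle step, namely correctly invoking and applying Dekster's hyperbolic Jung theorem to pin down the enclosing radius $R$ and to confirm that ${\rm sh}(R) = (2/\sqrt{3}){\rm sh}(t/2)$; the monotonicity and the explicit ring capacity are standard, and the final trigonometric identity is routine. I would therefore state Dekster's result precisely (the enclosing disk of minimal hyperbolic radius for a set of hyperbolic diameter $t$), substitute it into the ring-capacity formula, and simplify. One subtlety to check is that the continuum hypothesis on $E$ is not actually needed for the upper bound—connectedness is irrelevant here since monotonicity under inclusion only uses $E \subset \overline{D}$—so the theorem holds for any compact set of hyperbolic diameter $t$, though stating it for a continuum is harmless.
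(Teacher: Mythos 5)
Your proposal is correct and takes essentially the same route as the paper: the paper derives Theorem \ref{thm_1.3} from Corollary \ref{JungBd}(2), which combines precisely your three ingredients---monotonicity of capacity under inclusion (inequality \eqref{ine_caphypJung}), Dekster's hyperbolic Jung radius $h(2,t)={\rm arsh}\bigl((2/\sqrt{3})\,{\rm sh}(t/2)\bigr)$ from Theorem \ref{thm_rjungE} (so indeed ${\rm sh}(R)=v$ as you guessed), and the ring capacity $2\pi/\log\bigl(1/{\rm th}(R/2)\bigr)$ via Lemma \ref{lem_rhoball} and conformal invariance---finishing with the same half-argument identity \eqref{mysimple}. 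Your closing observation that connectedness of $E$ is not needed is likewise consistent with the paper, whose Corollary \ref{JungBd}(2) and Theorem \ref{thm_rjungE} are stated for arbitrary compact sets.
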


Given a number $t>0\,,$ {\it the hyperbolic Jung radius} is the smallest number $r>0$ such that
every set $E \subset \B^2\,$ with the hyperbolic diameter equal to $t$ is contained in some hyperbolic disk with the radius equal to $r\,.$  Originally,
the Jung radius was found in the context of the Euclidean geometry \cite[p. 33, Thm 2.8.4]{mmo} and its
hyperbolic counterpart was found for dimensions $n\ge 2$ by B.V. Dekster in \cite{de}.
His result is formulated below as
Theorem \ref{thm_rjungE} and Theorem \ref{thm_1.3} is based on the special case $n=2$ of his work. It should be noticed that, by the Riemann mapping theorem, \eqref{ine_in1.3} directly applies to the case of planar simply connected domains.
The  sharp upper bound in Theorem \ref{thm_1.3} is not known and
this motivates the following open problem.

\begin{nonsec}{\bf Open problem.}\label{SolyninA}
Given $t>0$, identify all connected compact sets $E\subset \B^2$ with the hyperbolic diameter $t$, which maximize the capacity ${\rm cap}(\B^2,E)$.
\end{nonsec}

Theorem \ref{thm_1.3} provides an upper
bound for the quantity
\begin{equation}\label{mybfun}
b(t) \equiv \sup \{  {\rm cap}(\B^2,E) \,: E\,\, {\rm continuum\,\,and}\,\, \rho(E)=t\}\,
\end{equation}
that we will analyse further, in order to find a lower bound for it. To this end  we have
to apply numerical methods. Our first step is to write an algorithm for computing the hyperbolic diameter of a set in a simply connected domain. The  boundary integral equation method developed in a series of recent papers \cite{LSN17,Nas-ETNA,nvs,nv} is used. Using this method, we can compute the hyperbolic diameter and the capacity of a subset bounded by piecewise smooth curves in a polygonal domain or in the unit disk.
We show that the capacity of a hyperbolic disk with
diameter $t,$ denoted $b_1(t)\,,$  is a minorant for the above function $b(t)\,,$ i.e. $  b(t) \ge b_1(t)\,,$ see \eqref{Rbds}. For this purpose we introduce  the aforementioned hyperbolic Reuleaux triangle and our numerical work shows that its capacity majorizes the capacity of a disk with the same hyperbolic diameter.
A delicate point here is the essential role of the hyperbolic geometry: the hyperbolic Reuleaux triangle cannot be replaced by the Euclidean Reuleaux triangle with the same hyperbolic
diameter, for its capacity is not a majorant for $b_1(t)\,$ for $t>2\,.$ The numerical algorithm is of independent interest, because it enables one to experimentally study  the
hyperbolic geometry of planar simply connected polygonal domains.

We apply our result to quasiconformal maps and prove the following result.

\begin{theorem} \label{qcHypDia}
Let $f: G_1\to G_2 = f(G_1)$ be a $K$- quasiconformal homeomorphism between two
simply connected domains $G_1$ and $G_2$ in $\R^2\,,$ and let $E \subset G_1$ be
a continuum. Then
\begin{equation}  \label{qcHypDia2}
{\rm th} \frac{\rho_{G_{2}}(f(E))}{2} \le 4 \left({\rm th} \frac{h(2,\rho_{G_{1}}(E))}{2} \right)^{1/K}
\end{equation}
where $\rho_{G_1}$ and $\rho_{G_2}$ refer to the hyperbolic metrics of $G_1$ and $G_2\,,$resp., and $h(2,t)$ stands for the hyperbolic Jung radius of a set with the hyperbolic
diameter equal to $t\,$ defined in Theorem \ref{thm_rjungE} due to B.V. Dekster  \cite{de}.
\end{theorem}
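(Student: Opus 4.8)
The plan is to reduce the statement to a quasiconformal self-map of the unit disk and then to combine Dekster's Jung radius bound (Theorem~\ref{thm_rjungE}) with the quasiconformal version of the Schwarz lemma. Since both the hyperbolic metric and the maximal dilatation of a quasiconformal map are conformal invariants, I would first choose Riemann maps $\alpha\colon\B^2\to G_1$ and $\beta\colon G_2\to\B^2$ and replace $f$ by $g=\beta\circ f\circ\alpha\colon\B^2\to\B^2$. Then $g$ is again $K$-quasiconformal, while $\alpha$ and $\beta$ are hyperbolic isometries, so with $\tilde E=\alpha^{-1}(E)$ one has $\rho_{\B^2}(\tilde E)=\rho_{G_1}(E)=t$ and $\rho_{\B^2}(g(\tilde E))=\rho_{G_2}(f(E))$. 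Hence it suffices to prove \eqref{qcHypDia2} for a $K$-quasiconformal map $g\colon\B^2\to\B^2$ and a continuum $\tilde E\subset\B^2$ of hyperbolic diameter $t$.

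I would then assemble two ingredients. First, by Theorem~\ref{thm_rjungE} the set $\tilde E$ lies in a hyperbolic disk $B$ of hyperbolic radius $h(2,t)$, whose hyperbolic diameter equals $2\,h(2,t)$; thus $\rho_{\B^2}(u,v)\le 2\,h(2,t)$ for all $u,v\in\tilde E$. Second, I would use the planar quasiconformal Schwarz lemma: for all $u,v\in\B^2$,
\begin{align*}
{\rm th}\frac{\rho_{\B^2}(g(u),g(v))}{2}\le\varphi_K\!\left({\rm th}\frac{\rho_{\B^2}(u,v)}{2}\right),
\end{align*}
where $\varphi_K$ is the plane distortion function. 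This is obtained by pre- and post-composing $g$ with M\"obius automorphisms of $\B^2$, which are hyperbolic isometries, so as to normalise $u\mapsto 0$ and $g(u)\mapsto 0$, and then applying the classical estimate $|g(z)|\le\varphi_K(|z|)$ for maps fixing the origin together with the identity ${\rm th}\bigl(\rho_{\B^2}(x,y)/2\bigr)=\left|(x-y)/(1-\overline{x}y)\right|$. Finally I would invoke the explicit majorant $\varphi_K(r)\le 4^{1-1/K}r^{1/K}$ (see, e.g., \cite{HKV}).

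To conclude, take any $u,v\in\tilde E\subset B$. Since $\rho_{\B^2}(u,v)\le 2\,h(2,t)$ and ${\rm th}$ and $\varphi_K$ are increasing, the Schwarz lemma gives
\begin{align*}
{\rm th}\frac{\rho_{\B^2}(g(u),g(v))}{2}\le\varphi_K\bigl({\rm th}\,h(2,t)\bigr)\le 4^{1-1/K}\bigl({\rm th}\,h(2,t)\bigr)^{1/K},
\end{align*}
and taking the supremum over $u,v\in\tilde E$ bounds ${\rm th}\bigl(\rho_{\B^2}(g(\tilde E))/2\bigr)$ by the right-hand side. Applying the doubling estimate ${\rm th}\,h(2,t)\le 2\,{\rm th}\bigl(h(2,t)/2\bigr)$ then yields
\begin{align*}
4^{1-1/K}\bigl({\rm th}\,h(2,t)\bigr)^{1/K}\le 4^{1-1/K}2^{1/K}\Bigl({\rm th}\tfrac{h(2,t)}{2}\Bigr)^{1/K}=4\cdot 2^{-1/K}\Bigl({\rm th}\tfrac{h(2,t)}{2}\Bigr)^{1/K}\le 4\Bigl({\rm th}\tfrac{h(2,t)}{2}\Bigr)^{1/K},
\end{align*}
which is precisely \eqref{qcHypDia2}.

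The reductions in the first step are routine and Dekster's theorem is quoted, so I expect the only delicate points to be the correct M\"obius normalisation in the quasiconformal Schwarz lemma and the bookkeeping of constants. The latter is the crux: passing through the full hyperbolic diameter $2\,h(2,t)$ of the enclosing Jung disk produces the factor ${\rm th}\,h(2,t)$, and it is exactly the cancellation $4^{1-1/K}2^{1/K}=4\cdot2^{-1/K}\le 4$ (valid for $K\ge1$), combined with the doubling formula for ${\rm th}$, that collapses the constant to the stated value $4$ while keeping the argument ${\rm th}(h(2,t)/2)$.
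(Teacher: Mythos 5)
Your proposal is correct, but it takes a genuinely different route from the paper's. The paper argues through condenser capacities: with $\Delta=\Delta(E,\partial\B^2;\B^2)$ it combines the quasi-invariance $\M(f\Delta)\le K\M(\Delta)$ with a lower bound for $\M(f\Delta)$ obtained from Lemma \ref{lem_capgamma}(2), \eqref{capGro} and the estimate $\mu(r)<\log(4/r)$ applied to a pair of points realizing the hyperbolic diameter of $f(E)$, and with the upper bound $\M(\Delta)\le 2\pi/\log\bigl(1/{\rm th}(h(2,\rho_{G_1}(E))/2)\bigr)$ coming from Corollary \ref{JungBd}; exponentiating the resulting chain of inequalities yields \eqref{qcHypDia2}, the constant $4$ entering through $\mu(r)<\log(4/r)$. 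You instead quote the two-point quasiconformal Schwarz lemma ${\rm th}\bigl(\rho_{\B^2}(g(u),g(v))/2\bigr)\le\varphi_K\bigl({\rm th}(\rho_{\B^2}(u,v)/2)\bigr)$ together with the Hersch--Pfluger majorant $\varphi_K(r)\le 4^{1-1/K}r^{1/K}$, feed in Dekster's Jung disk (Theorem \ref{thm_rjungE}) to control $\rho_{\B^2}(u,v)$, and finish with the doubling inequality ${\rm th}\,h\le 2\,{\rm th}(h/2)$ and the bookkeeping $4^{1-1/K}2^{1/K}=4\cdot 2^{-1/K}\le 4$; all of these steps check out (the Riemann-map reduction is the same in both proofs, the supremum over $u,v\in\tilde E$ does pass to the diameter since ${\rm th}$ is increasing, and $g$ maps $\B^2$ onto $\B^2$ so the Schwarz lemma applies). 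What each approach buys: yours bypasses the capacity machinery entirely once the Schwarz lemma is taken as a black box, and it in fact yields the marginally sharper intermediate constant $4\cdot 2^{-1/K}$; the paper's version stays inside the condenser framework it has already built (Corollary \ref{JungBd}) and makes explicit where each modulus estimate enters. The two arguments are ultimately cousins, since the cited bound $\varphi_K(r)\le 4^{1-1/K}r^{1/K}$ is itself proved from the same Gr\"otzsch-modulus estimate $\mu(r)<\log(4/r)$ that the paper invokes directly. One small remark: your detour through the full diameter $2\,h(2,t)$ of the Jung disk is harmless but avoidable --- since $\rho_{\B^2}(u,v)\le t$ for $u,v\in\tilde E$, you could apply the Schwarz lemma with $t$ directly and then use $t\le 2\,h(2,t)$ from Lemma \ref{lem_ineqforhnt} together with the same doubling step, arriving at the identical bound.
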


For a large class of simply connected plane domains, so called $\varphi$-uniform domains,
we give explicit  bounds for the hyperbolic Jung radius of a compact set in the domain.

We are indebted to Prof. Alex Solynin for pointing out the above open problem.
%%%%%%%%%%%%%%%%%%%%%%%%%%%%%%%%%%%%%%%%%
%%%%%%%%%%%%%%%%%%%%%%%%%%%%%%%%%%%%%%%%%
%%%%%%%%%%%%%%%%%%%%%%%%%%%%%%%%%%%%%%%%%
\section{Preliminaries}
%%%%%%%%%%%%%%%%%%%%%%%%%%%%%%%%%%%%%%%%%
%%%%%%%%%%%%%%%%%%%%%%%%%%%%%%%%%%%%%%%%%
%%%%%%%%%%%%%%%%%%%%%%%%%%%%%%%%%%%%%%%%%

An open ball defined with the Euclidean metric is $B^n(x,r)=\{y\in\R^n\text{ }\text{: }|x-y|<r\}$ and the corresponding closed ball is $\overline{B}^n(x,r)=\{y\in\R^n\text{ }\text{: }|x-y|\leq r\}$. The sphere of these balls is $S^{n-1}(x,r)=\{y\in\R^n\text{ }\text{: }|x-y|=r\}$. Note that if the center $x$ or the radius $r$ is not otherwise specified in these notations, it means that $x=0$ and $r=1$. In a metric space $(X,d)\,,$ a ball centered at $x$ and with radius $r>0$ is  $B_d(x,r)\,$ and the diameter of a non-empty set $A \subset X$ is
$d(A).$

In 
%the upper half-plane $\uhp^n=\{(x_1,...,x_n)\in\R^n\text{: }\text{}x_n>0\}$ and 
the Poincar\'e unit ball $\B^n=\{x\in\R^n\text{:}|x|<1\}$, the hyperbolic metric is defined as \cite[(2.8) p. 15]{BM}
\begin{align*}
%\text{ch }\rho_{\uhp^n}(x,y)&=1+\frac{|x-y|^2}{2x_ny_n},\quad x,y\in\uhp^n,\\
\text{sh}^2\frac{\rho_{\B^n}(x,y)}{2}&=\frac{|x-y|^2}{(1-|x|^2)(1-|y|^2)},\quad x,y\in\B^n.
\end{align*}
\begin{comment}
In the special case $n=2$, this formula can be written as
\begin{align*}
%\text{th}\frac{\rho_{\uhp^2}(x,y)}{2}&=\text{th}\left(\frac{1}{2}\log\left(\frac{|x-\overline{y}|+|x-y|}{|x-\overline{y}|-|x-y|}\right)\right)=\left|\frac{x-y}{x-\overline{y}}\right|,\\
\text{th}\frac{\rho_{\B^2}(x,y)}{2}&=\text{th}\left(\frac{1}{2}\log\left(\frac{|1-x\overline{y}|+|x-y|}{|1-x\overline{y}|-|x-y|}\right)\right)=\left|\frac{x-y}{1-x\overline{y}}\right|=\frac{|x-y|}{A[x,y]},
\end{align*}
where $\overline{y}$ is the complex conjugate of $y$ and $A[x,y]=\sqrt{|x-y|^2+(1-|x|^2)(1-|y|^2)}$ is the Ahlfors bracket \cite[(3.17) p. 39]{HKV}.
\end{comment}
The hyperbolic segment between the points $x,y$ is denoted by $J[x,y]$. Furthermore, the hyperbolic balls $B_\rho(q,R)$ are Euclidean balls with
the center and the  radius given by the following lemma.

\begin{lemma}\label{lem_rhoball}\emph{\cite[(4.20) p. 56]{HKV}}
The equality $B_\rho(q,R)=B^n(j,h)$ holds for $q \in \B^n$ and $R > 0$, if 
\begin{align*}
j=\frac{q(1-t^2)}{1-|q|^2t^2},\quad
h=\frac{(1-|q|^2)t}{1-|q|^2t^2}\quad\text{and}\quad
t={\rm th}({R/2}).
\end{align*}
\end{lemma}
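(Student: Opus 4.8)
The plan is to derive both formulas by a direct computation from the definition of the hyperbolic metric, with no appeal to the explicit form of the M\"obius self-maps of $\Bn$. By definition, a point $x\in\Bn$ lies in $B_\rho(q,R)$ precisely when $\rho_{\Bn}(q,x)<R$, and since $\mathrm{sh}$ is strictly increasing this is equivalent to $\mathrm{sh}^2(\rho_{\Bn}(q,x)/2)<\mathrm{sh}^2(R/2)$. Substituting the defining formula for the metric turns this into
\begin{align*}
\frac{|q-x|^2}{(1-|q|^2)(1-|x|^2)}<\mathrm{sh}^2\frac{R}{2}.
\end{align*}
The first step is to replace $\mathrm{sh}^2(R/2)$ by a function of $t=\tnh(R/2)$: from $\tnh^2(R/2)=\mathrm{sh}^2(R/2)/\mathrm{ch}^2(R/2)$ and $\mathrm{ch}^2=1+\mathrm{sh}^2$ one gets $\mathrm{sh}^2(R/2)=t^2/(1-t^2)$, so the defining inequality becomes, after clearing the (positive) denominators,
\begin{align*}
(1-t^2)\,|q-x|^2< t^2\,(1-|q|^2)(1-|x|^2).
\end{align*}

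Next I would expand $|q-x|^2=|q|^2-2\,q\cdot x+|x|^2$ and the product on the right, then collect the result as a quadratic inequality in $x$. Grouping the coefficient of $|x|^2$, the coefficient of $q\cdot x$, and the constant term, the inequality takes the form
\begin{align*}
(1-t^2|q|^2)\,|x|^2-2(1-t^2)\,q\cdot x+(|q|^2-t^2)<0.
\end{align*}
Since $|q|<1$ and $0<t<1$ we have $1-t^2|q|^2>0$, so dividing through and completing the square rewrites this as $|x-j|^2<|j|^2-c$ with
\begin{align*}
j=\frac{(1-t^2)}{1-t^2|q|^2}\,q,\qquad c=\frac{|q|^2-t^2}{1-t^2|q|^2},
\end{align*}
which already identifies the center $j$ with the claimed value $j=q(1-t^2)/(1-|q|^2t^2)$; note that $j$ is a nonnegative multiple of $q$, as the rotational symmetry about the line through $0$ and $q$ demands.

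It remains to evaluate the squared radius $h^2=|j|^2-c$, and this algebraic simplification is the only delicate point. Putting the two fractions over the common denominator $(1-t^2|q|^2)^2$, the numerator is
\begin{align*}
(1-t^2)^2|q|^2-(|q|^2-t^2)(1-t^2|q|^2),
\end{align*}
which I expect to collapse, after expansion and cancellation of the $|q|^2$, $t^4|q|^2$, and mixed terms, to the perfect square $t^2(1-|q|^2)^2$. Taking the positive square root then yields $h=t(1-|q|^2)/(1-t^2|q|^2)$, exactly the asserted radius, and establishes $B_\rho(q,R)=B^n(j,h)$.

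As a consistency check one may verify the centered case $q=0$, where $j=0$ and $h=t$ reproduce the familiar identity $B_\rho(0,R)=B^n(0,\tnh(R/2))$. Alternatively, the qualitative fact that the level set is a Euclidean ball at all follows a priori from the invariance of the family of spheres under the M\"obius self-maps of $\Bn$; adopting that viewpoint would reduce the task to the same determination of $j$ and $h$, so the elementary computation above is the most transparent route and I would present it in full.
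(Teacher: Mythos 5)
Your proof is correct, and in fact the paper offers no argument of its own to compare against: the lemma is quoted directly from the reference \cite[(4.20) p.~56]{HKV}, so your elementary derivation is precisely the verification the paper delegates to the literature. I checked the algebra: the reduction ${\rm sh}^2(R/2)=t^2/(1-t^2)$ is right, expanding $(1-t^2)|q-x|^2<t^2(1-|q|^2)(1-|x|^2)$ does give the quadratic $(1-t^2|q|^2)|x|^2-2(1-t^2)\,q\cdot x+(|q|^2-t^2)<0$ (the constant term $(1-t^2)|q|^2-t^2+t^2|q|^2$ collapses to $|q|^2-t^2$ as you claim), and the radius numerator $(1-t^2)^2|q|^2-(|q|^2-t^2)(1-t^2|q|^2)$ does simplify to $t^2(1-|q|^2)^2$, so $h=t(1-|q|^2)/(1-t^2|q|^2)$ with no sign ambiguity since $t\in(0,1)$ and $|q|<1$. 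Two minor points: the strict monotonicity of ${\rm sh}$ on $[0,\infty)$ that you invoke is exactly what makes the squared-metric reformulation an equivalence, so the set equality of \emph{open} balls follows in both directions at once, which is worth stating since the lemma asserts equality rather than an inclusion; and the perfect-square form of $h^2$ automatically shows the completed-square constant $|j|^2-c$ is positive, so the locus is a genuine nonempty ball --- you use this implicitly when taking the square root. Your consistency check at $q=0$ (recovering $B_\rho(0,R)=B^n(0,\tnh(R/2))$, the fact used in the paper's proof of Lemma \ref{cgqm_5.14}(2)) and your remark that $j$ must lie on the ray through $q$ by rotational symmetry are both sound. Compared with the alternative you mention --- deducing a priori from M\"obius invariance of the family of spheres that the hyperbolic ball is a Euclidean ball --- your direct computation has the advantage of producing the center and radius in the same stroke, at the cost of a page of algebra that the invariance argument would not eliminate anyway; presenting the computation in full, as you propose, is the right call for a self-contained write-up.
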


For a given simply connected planar domain $G\,,$ by means of the Riemann mapping theorem, one can define a conformal map of $G$ onto the unit disk $\B^2$, $f\,:\,G\to \B^2=f(G)$, and thus define the hyperbolic metric $\rho_G$ in $G$ by \cite{BM}
\begin{equation}\label{eq:rhoG}
\rho_G(x,y) = \rho_{\B^2}(f(x),f(y)),\quad x,y\in G.
\end{equation}
As an example, consider the simply connected domain $G$ inside the polygon with the vertices $0$, $3$, $3+\i$, $2+\i$, $2+0.2\i$, $1+0.2\i$, $1+\i$, and $\i$. Figure~\ref{fig:hyb-cir} (left) displays examples of hyperbolic circles in the domain $G$.
These hyperbolic circles are plotted by plotting the contour lines of the function
\[
u(z)=\rho_G(\alpha,z), \quad z\in G,
\]
corresponding to the levels (the hyperbolic radii of the hyperbolic circles) $0.5$, $1.5$, $4$, $10$, $16$, $21$, $23$, $23.5$, and $23.75$ where $\alpha=0.5+0.5\i$. The values of $\rho_G(\alpha,z)$ are computed using the method described in Appendix~\ref{sec:num-dia} with $n=2^{13}$.

\begin{figure}[hbt]
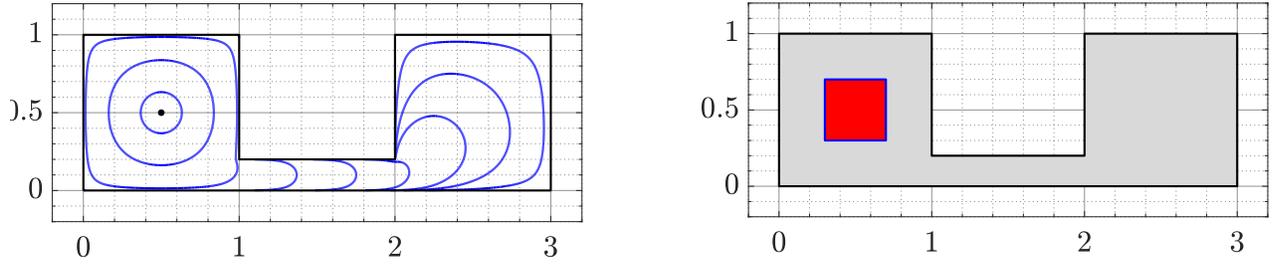
 %
\centerline{
\scalebox{0.525}{\includegraphics[trim=3.25cm 10.25cm 3.25cm 10.5cm,clip]{HypCir_2}}
\hfill
\scalebox{0.525}{\includegraphics[trim=3.25cm 10.75cm 3.25cm 10.5cm,clip]{HypSqu_2}}
}
\caption{Examples of hyperbolic circles in a simply connected polygonal domain $G$ (left) and a square with the vertices $0.5\pm h+(0.5\pm h)\i$ for $h=0.2$ in the simply connected polygonal domain $G$ (right).}
\label{fig:hyb-cir}
\end{figure}

The hyperbolic diameter of a compact set $E \subset G$, denoted by $\rho_G(E)\,$, is defined by
\[
\rho_G(E) = \sup\{\rho_{G}(x,y)\,|\, x,y\in E\}.
\]
For the polygonal domain $G$ in Figure~\ref{fig:hyb-cir} (left), let $E\subset G$ be the closure of the square with the vertices $0.5\pm h+(0.5\pm h)\i$ for $0<h<0.5$ (see Figure~\ref{fig:hyb-cir} (right) for $h=2$). 
The approximate values of the hyperbolic diameter of the set $E$ for several values of $h$, computed by the method described in Appendix~\ref{sec:num-dia} with $\alpha=0.5+0.5\i$ and $n=2^{13}$, are given in Table~\ref{tab:hyp-E}. 
Table~\ref{tab:hyp-E} also presents the values of the capacity of the condenser $(G,E)$, which are computed using the method described in Appendix~\ref{sec:num-cap} with $\alpha=1.5+0.1\i$, $z_2=0.5+0.5\i$, and $n=2^{13}$.

%\begin{figure}[hbt] %
%\centerline{
%\scalebox{0.75}{\includegraphics[trim=0cm 10.75cm 0cm 10.5cm,clip]{HypSqu_2}}
%}
%\caption{The square with the vertices $0.5\pm h+(0.5\pm h)\i$ for $h=0.2$ in the simply connected polygonal domain $G$.}
%\label{fig:hyb-E}
%\end{figure}

\begin{table}[hbt]
\caption{The hyperbolic diameter $\rho_G(E)$ and the capacity $\capa(G,E)$ for the sets $G$ and $E$ shown in Figure~\ref{fig:hyb-cir} (right).}
\label{tab:hyp-E}%
\begin{tabular}{l@{\hspace{0.5cm}}|@{\hspace{0.5cm}}l@{\hspace{0.5cm}}|@{\hspace{0.5cm}}l} \hline
$h$     & $\rho_G(E)$ & $\capa(G,E)$ \\ \hline
$0.1$   & $1.0729$    & $4.1331$ \\
$0.2$   & $2.3071$    & $7.5564$ \\
$0.3$   & $3.9596$    & $14.2096$\\
$0.4$   & $6.7393$    & $33.9643$\\
$0.45$  & $9.5123$    & $72.8330$\\
\hline
\end{tabular}
\end{table}

\begin{comment}
Next, define the \emph{absolute (cross) ratio} as in \cite[(3.10), p. 33]{HKV}
\begin{align*}
|a,b,c,d|=\frac{q(a,c)q(b,d)}{q(a,b)q(c,d)},\text{ } a,b,c,d\in\overline{\R}^n;\quad
|a,b,c,d|=\frac{|a-c||b-d|}{|a-b||c-d|},\text{ }a,b,c,d\in\R^n.
\end{align*}
Note that this ratio is conformally invariant. Furthermore, it can be used to define the hyperbolic metric, see \cite[p.72, Thm 5.2.7]{be}.
\end{comment}

Note that while we already defined the condenser capacity in \eqref{def_condensercapacity}, its definition can be also written as
\begin{align*}
{\rm cap}(G,E)=\M(\Delta(E,\partial G;G)),
\end{align*}
as in  \cite[Thm 5.2.3, p.164]{GMP}, \cite[Thm 9.6, p. 152]{HKV}. Here, $\Delta(E,F;G)$ stands for the family of all the curves in the set $G$ that have one end point in the set $E$ and another end point in $F$ \cite[p. 106]{HKV}. The definition and basic properties of the modulus $\M(\Gamma)$ of a curve family $\Gamma$ can be found in \cite[Ch. 7, pp. 103-131]{HKV}. We often use the fact that the capacity is, in the same way as the modulus, conformally invariant.

\begin{lemma}\label{cgqm_5.14}\emph{}
(1) If $0<a<b$ and $D=\overline{B}^n(b)\backslash B^n(a)$,
\begin{align*}
\M(\Delta(S^{n-1}(a),S^{n-1}(b);D))=\omega_{n-1}(\log({b}/{a}))^{1-n}.
\end{align*}
(2) If $R>0$ then for $x \in \B^n$ and $R>0$
\begin{align*}
\M(\Delta(S^{n-1},B_{\rho}(x,R);\B^n))=\omega_{n-1}(\log({1}/{{\rm th} (R/2)}))^{1-n} \,.
\end{align*}
Here, $\omega_{n-1}$ is the $(n-1)$-dimensional surface area of the unit sphere $S^{n-1}.$ In particular, $\omega_1=2\pi.$
\end{lemma}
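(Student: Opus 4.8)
The plan is to prove part (1) directly from the definition of the modulus by matching an explicit upper bound with a H\"older-type lower bound, and then to obtain part (2) from part (1) by a conformally invariant change of variables. Recall that $\M(\Gamma)=\inf_{\rho}\int_D\rho^n\,dm$, where the infimum is over all nonnegative Borel functions $\rho$ that are admissible for $\Gamma$, meaning $\int_\gamma\rho\,ds\ge1$ for every locally rectifiable $\gamma\in\Gamma$. Writing $\Gamma=\Delta(S^{n-1}(a),S^{n-1}(b);D)$, I would first test with the radial density
\begin{align*}
\rho_0(x)=\frac{1}{|x|\log(b/a)},\qquad a\le|x|\le b.
\end{align*}
Since $|x|$ ranges over $[a,b]$ along any $\gamma\in\Gamma$ and $\rho_0$ depends only on $|x|$, the usual length estimate gives $\int_\gamma\rho_0\,ds\ge(\log(b/a))^{-1}\int_a^b r^{-1}\,dr=1$, so $\rho_0$ is admissible. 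Passing to spherical coordinates,
\begin{align*}
\int_D\rho_0^n\,dm=\frac{\omega_{n-1}}{(\log(b/a))^{n}}\int_a^b\frac{dr}{r}=\omega_{n-1}(\log(b/a))^{1-n},
\end{align*}
which yields the upper bound $\M(\Gamma)\le\omega_{n-1}(\log(b/a))^{1-n}$.

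For the matching lower bound, let $\rho$ be any admissible density. For each $\theta\in S^{n-1}$ the radial segment $r\mapsto r\theta$, $a\le r\le b$, belongs to $\Gamma$, so $\int_a^b\rho(r\theta)\,dr\ge1$. Applying H\"older's inequality with exponents $n$ and $n/(n-1)$ to the factorization $\rho(r\theta)=\bigl(\rho(r\theta)\,r^{(n-1)/n}\bigr)\cdot r^{-(n-1)/n}$ gives
\begin{align*}
1\le\left(\int_a^b\rho(r\theta)^n r^{n-1}\,dr\right)^{1/n}(\log(b/a))^{(n-1)/n},
\end{align*}
so that $\int_a^b\rho(r\theta)^n r^{n-1}\,dr\ge(\log(b/a))^{1-n}$. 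Integrating over $\theta\in S^{n-1}$ and using Tonelli's theorem to recognize the left side as $\int_D\rho^n\,dm$ in spherical coordinates, I obtain $\int_D\rho^n\,dm\ge\omega_{n-1}(\log(b/a))^{1-n}$. Taking the infimum over $\rho$ finishes part (1).

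For part (2), I would reduce to part (1) by a hyperbolic isometry. By Lemma \ref{lem_rhoball} the set $B_\rho(x,R)$ is a Euclidean ball, and there is a M\"obius self-map $T$ of $\B^n$ with $T(x)=0$; such a $T$ is an isometry of $\rho_{\B^n}$ and fixes $S^{n-1}$ setwise, so it carries $B_\rho(x,R)$ onto $B_\rho(0,R)$. Taking $q=0$ in Lemma \ref{lem_rhoball} yields $j=0$ and $h={\rm th}(R/2)$, hence $B_\rho(0,R)=B^n(0,{\rm th}(R/2))$. Because the modulus is conformally invariant, $T$ maps $\Delta(S^{n-1},B_\rho(x,R);\B^n)$ onto $\Delta(S^{n-1},B^n(0,{\rm th}(R/2));\B^n)$ without changing its modulus. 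Finally, every curve of this family joining $S^{n-1}$ to the inner ball crosses the sphere $S^{n-1}({\rm th}(R/2))$, so by the overflowing (restriction) principle it has the same modulus as the corresponding sphere-to-sphere family; applying part (1) with $a={\rm th}(R/2)$ and $b=1$ gives $\omega_{n-1}(\log(1/{\rm th}(R/2)))^{1-n}$, as claimed.

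The essential content is the lower bound in part (1): the crux is to spot the radial extremal density and to combine the admissibility condition on radial segments with H\"older's inequality in exactly the right exponents. The remaining steps---the explicit upper-bound computation, the Tonelli interchange for nonnegative densities, and the conformal reduction in part (2)---are routine; the only point requiring a word of care is verifying that the chosen M\"obius map is genuinely a $\rho_{\B^n}$-isometry fixing the boundary sphere, which is standard in the Poincar\'e model.
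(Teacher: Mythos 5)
Your proposal is correct and follows essentially the same route as the paper: for part (2) you reduce to $x=0$ by a M\"obius isometry and apply Lemma \ref{lem_rhoball} to identify $B_\rho(0,R)=B^n(0,{\rm th}(R/2))$, exactly as the paper does. For part (1) the paper simply cites the well-known formula from \cite[(7.3), p.~107]{HKV}, and your radial-density upper bound combined with the H\"older lower bound is precisely the standard proof behind that citation, so there is no substantive difference in approach.
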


\begin{proof}
(1) This is a well-known basic fact, see e.g. \cite[(7.3), p. 107]{HKV}.

(2) The value of the left hand side is independent of $x$ by the M\"obius invariance
of the modulus and of the hyperbolic metric  and hence we may assume that $x=0\,.$
By Lemma \ref{lem_rhoball}, $B_{\rho}(x,R) = B_{\rho}(0,R) =B^n(0,{\rm th} (R/2))$ and
hence the proof follows from part (1).
\end{proof}

The Gr\"otzsch and Teichm\"uller capacities are the following decreasing homeomorphisms $\gamma_n:(1,\infty)\to (0,\infty), $  $\tau_n:(0,\infty)\to (0,\infty)$ \cite[(7.17), p. 121]{HKV}:
\begin{align*}
\gamma_n(s)&=\M(\Delta(\overline{\B}^n,[se_1,\infty];\R^n)),\quad s>1,\\
\tau_n(s)&=\M(\Delta([-e_1,0],[se_1,\infty];\R^n)),\quad s>0,
\end{align*}

%\begin{align*}
%\gamma_n(s)&=\M(\Delta(\overline{\B}^n,[se_1,\infty];\R^n\setminus(\overline{\B}^n\cup[se_1,\infty]))),\quad s>1,\\
%\tau_n(s)&=\M(\Delta([-e_1,0],[se_1,\infty];\R^n\setminus([-e_1,0]\cup[se_1,\infty]))),\quad s>0.
%\end{align*}
\noindent
where the notation $e_1,...,e_n$ stands for the unit vectors of $\R^n$. These capacities satisfy 
$\gamma_n(s) = 2^{n-1} \tau_n(s^2-1), $ for $s>1$ and various estimates are given in \cite[Chapter 9]{HKV} for $n\ge3\,.$ For $n=2, r\in(0,1),$ the following explicit
formulas are given by \cite[(7.18), p. 122]{HKV}, 
\begin{equation} \label{capGro}
\gamma_2(1/r)=\frac{2\pi}{\mu(r)}\,; \quad \mu(r)=\frac{\pi}{2}\frac{\K(\sqrt{1-r^2})}{\K(r)},\quad
\K(r)=\int^1_0 \frac{dx}{\sqrt{(1-x^2)(1-r^2x^2)}} \,.
\end{equation}
%for $s>1$, where
%\begin{align*}
%\mu(r)=\frac{\pi}{2}\frac{\K(\sqrt{1-r^2})}{\K(r)},\quad
%\K(r)=\int^1_0[(1-x^2)(1-r^2x^2)]^{-1\slash2}dx
%\end{align*}
%for $0<r<1$.

\begin{lemma}\label{lem_capgamma}
(1) \cite[Lemma 9.20, p. 163]{HKV}
If $x,y\in\B^n, x\neq y\,,$ and $E\subset\B^n$ is a continuum with $x,y\in E$, then 
\begin{align*}
\capa(\B^n,E)
\geq
\gamma_n\left(\frac{1}{\tnh(\rho_{\B^n}(x,y)\slash2)}\right).
\end{align*}
Here, the equality holds if $E$ is the geodesic segment $J[x,y]$ of the hyperbolic
metric joining $x$ and $y\,.$

(2) If $G$ is a simply connected domain in $\R^2\,,$ $E\subset G$ is a continuum, and
$x,y \in G, x\neq y\,,$ then
\begin{align*}
\capa(G,E)
\geq
\gamma_2\left(\frac{1}{\tnh(\rho_{G}(x,y)\slash2)}\right).
\end{align*}
\end{lemma}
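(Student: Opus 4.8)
The plan is to reduce part (2) to part (1) by transplanting the condenser to the unit disk via a Riemann map and then invoking the conformal invariance of both the capacity and the hyperbolic metric; part (1) is already available to me as the cited result \cite[Lemma 9.20, p.\ 163]{HKV}. Throughout I read the hypothesis as $x,y\in E$, since otherwise the right-hand side can be made arbitrarily large by moving $x,y$ apart while keeping $E$ fixed, and the asserted inequality would fail.

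First I would fix a Riemann map $f\colon G\to\B^2=f(G)$, which exists because $G$ is a simply connected proper subdomain of $\R^2=\C$. Since $f$ is a homeomorphism onto $\B^2$ and $E$ is compact and connected, the image $f(E)$ is again a continuum contained in $\B^2$, and $f(x),f(y)\in f(E)$ are distinct points. Next I would record the two invariance properties on which the whole argument rests: the conformal invariance of the capacity gives
\begin{align*}
\capa(G,E)=\capa(\B^2,f(E)),
\end{align*}
while the hyperbolic metric $\rho_G$ was defined in \eqref{eq:rhoG} precisely so that
\begin{align*}
\rho_G(x,y)=\rho_{\B^2}(f(x),f(y)).
\end{align*}

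With these in hand I would apply part (1) in the case $n=2$ to the continuum $f(E)\subset\B^2$ and to the points $f(x),f(y)$, obtaining
\begin{align*}
\capa(\B^2,f(E))\geq\gamma_2\!\left(\frac{1}{\tnh(\rho_{\B^2}(f(x),f(y))/2)}\right).
\end{align*}
Substituting the two displayed identities then yields the claimed bound for $\capa(G,E)$. I do not expect a serious obstacle here: the only points needing care are the verification that $f(E)$ is genuinely a continuum, so that part (1) is applicable, and the bookkeeping that one and the same Riemann map simultaneously transports the capacity and the hyperbolic distance, which is exactly what \eqref{eq:rhoG} guarantees. In particular, the conformal invariance makes the choice of Riemann map irrelevant, so no normalization of $f$ is required.
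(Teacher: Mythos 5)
Your proposal is correct and is essentially the paper's own proof: the paper likewise reduces part (2) to the cited part (1) via the Riemann mapping theorem, using the conformal invariance of the capacity together with the definition \eqref{eq:rhoG} of $\rho_G$. Your added observation that the hypothesis in (2) should be read as $x,y\in E$ (as in part (1)) is a sound correction of an omission in the statement, and your checks that $f(E)$ is a continuum and that one Riemann map transports both quantities are exactly the details the paper leaves implicit.
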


\begin{proof}
(2) By the Riemann mapping theorem, we may assume without loss of generality
that $G=\B^2$ and hence the proof follows from part (1).
\end{proof}

\begin{nonsec}{\bf Sets of constant width \cite{mmo}.}  {\rm
Let $E \subset \R^n$ be a compact set with diameter equal to $t > 0\,.$
We say that $E$ is a set of constant width if for every $z  \in \partial E$,
\[
t= \sup \{  |z-x|:  x \in E \}\,.
\]}
\end{nonsec}

%\begin{figure}[hbt] %
%\centerline{
%\scalebox{0.5}{\includegraphics[width=\textwidth]{hypReuTriHlp}}
%}
%\caption{A hyperbolic Reuleaux triangle with vertices of $\{z:  |z|=0.5\}$.
%The dotted circular arc is part of the boundary of the disk 
%$B_{\rho}(0.5,M)$ defining this triangle and $M$ is the hyperbolic distance %between vertices.}
%\label{fig:Rtri}
%\end{figure}

\begin{figure}[hbt] %
\centerline{
\scalebox{0.5}{\includegraphics[trim=1cm 8.6cm 1.5cm 8.4cm,clip]{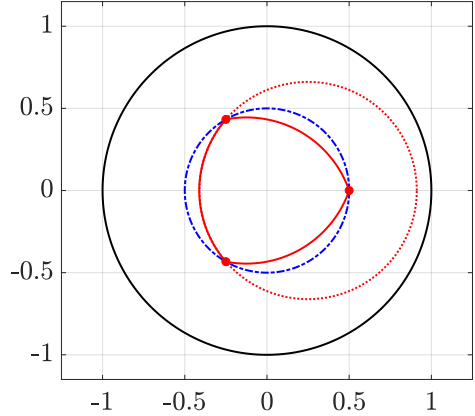}}
}
\caption{A hyperbolic Reuleaux triangle with vertices on $\{z:  |z|=0.5\}$.
The dotted circular arc is a part of the boundary of the disk 
$B_{\rho}(0.5,M)$, which is one of the three hyperbolic disks defining this triangle, and $M$ is the hyperbolic distance between vertices of the triangle.}
\label{fig:Rtri}
\end{figure}

\begin{nonsec}{\bf  The Euclidean and hyperbolic  Reuleaux triangle.} {\rm
An example of a set of constant width is {\it the Reuleaux triangle}, the intersection of three closed
disks with radii equal to $t>0$ and with centers at the vertices of an equilateral
triangle having side lengths equal to $t.$
We can define {\it the hyperbolic Reuleaux triangle}, a subset of the unit disk $\B^2$ in the same
way. To be more explicit, consider the  hyperbolic Reuleaux triangle with vertices at $r, r \,{\rm exp}(2 \pi \i /3),   r\, {\rm exp}(4 \pi \i/3)\,,$ and let
\[
M= \rho_{\B^2}(r, r \,{\rm exp}(2 \pi \i /3)) = 2 \,{\rm arsh}\, \frac{r \sqrt{3}}{1-r^2}\,.
\]
By Lemma \ref{lem_rhoball}, $B_{\rho}(r, M)=B^2(y,h)$ where $y$ and $h$ are given by
\[
y=\frac{(1-t^2)r}{1- r^2 t^2}, \quad h=\frac{(1-r^2)t}{1- r^2 t^2}\,\,,\quad t= {\rm th(arsh }\, \frac{r \sqrt{3}}{1-r^2})\quad .
\]
Let $D_1= \overline{B}^2(y,h)$ and let $D_2, D_3$ be the disks obtained from $D_1$ by 
rotation around the origin with angles $2 \pi/3$ and $4 \pi/3\,,$ resp. Now, the hyperbolic 
Reuleaux triangle with vertices at the above points is $D_1 \cap D_2 \cap D_3 \,.$
%Definition of Reuleaux triangle, take a look at \cite{mmo}.
}
\end{nonsec}

%%%%%%%%%%%%%%%%%%%%%%%%%%%%%%%%%%%%%%%%%
%%%%%%%%%%%%%%%%%%%%%%%%%%%%%%%%%%%%%%%%%
%%%%%%%%%%%%%%%%%%%%%%%%%%%%%%%%%%%%%%%%%
\section{Capacity and Jung radius}
%%%%%%%%%%%%%%%%%%%%%%%%%%%%%%%%%%%%%%%%%
%%%%%%%%%%%%%%%%%%%%%%%%%%%%%%%%%%%%%%%%%
%%%%%%%%%%%%%%%%%%%%%%%%%%%%%%%%%%%%%%%%%

For a compact subset $E$ of a metric space $X$, the Jung radius is the least number $r>0$ such that, for some $x\in X$, $E$ is a subset of the closed ball centered at $x$ with the radius $r$ 
\cite{mmo}.
%\textbf{cite smth!!!!!1}. 
The metric space in our work will be the hyperbolic disk and we denote the hyperbolic Jung radius of the set $E$ by $r_{Jung}(E)$. Clearly, it follows from the conformal invariance of the hyperbolic metric that the Jung radius is conformally invariant. Because of the same reason, for every simply connected domain $G\subsetneq\R^2$ and all compact sets $E\subset G$, there exists $z\in G$ with
\begin{align}\label{ine_caphypJung}
{\rm cap}(G,E)\leq{\rm cap}(G,\overline{B}_\rho(z,r_{Jung}(E))).     
\end{align}
By Lemma \ref{lem_rhoball}, $B_\rho(z,r_{Jung}(E))$ is conformally equivalent to $B^2(0,{\rm th}(r_{Jung}(E)\slash2))$ and thus it follows from \eqref{ine_caphypJung} and Riemann's mapping theorem that
\begin{equation} \label{2dimbd}
{\rm cap}(G,E)\leq\frac{2\pi}{\log(1\slash{\rm th}(r_{Jung}(E)\slash2))}.   
\end{equation}

\begin{theorem}\label{thm_rjungE}
{\rm B.V. Dekster \cite[Thm 2, (1.3)]{de}}
If $E\subset\B^n$ is a compact set with $\rho_{\B^n}(E) \le t$, then
\begin{align*}
r_{Jung}(E)\leq{\rm arsh}\left(\sqrt{\frac{2n}{n+1}}{\rm sh}\frac{t}{2}\right)\equiv h(n,t).    
\end{align*}
\end{theorem}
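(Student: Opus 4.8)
The plan is to establish this sharp Jung-type inequality by combining the classical variational description of the smallest enclosing ball with a short computation carried out in the hyperboloid model of $\B^n$. By definition $r_{Jung}(E)$ is the radius of the smallest closed hyperbolic ball containing $E$; since $E$ is compact, such a ball $\overline{B}_\rho(x_0,r)$ with $r=r_{Jung}(E)$ exists. If $E$ is a single point then $r=0$ and there is nothing to prove, so I assume $r>0$. I would realize $\mathbb{H}^n$ as the upper sheet of the hyperboloid $\{x\in\R^{n+1}:\langle x,x\rangle=-1,\ x_0>0\}$ with $\langle x,y\rangle=-x_0y_0+\sum_{i\ge1}x_iy_i$ and $\cosh\rho(x,y)=-\langle x,y\rangle$; this model is isometric to the Poincar\'e ball used above. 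After a hyperbolic isometry I may place the center at $e_0=(1,0,\dots,0)$, so that the bounding sphere $S_\rho(e_0,r)=\partial B_\rho(e_0,r)$ is $\{x_0=\cosh r\}$ and every point on it has spatial part $u\in\R^n$ with $|u|=\sinh r$.

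The key structural input is the first-variation characterization of the optimal center. Because $r$ cannot be decreased by moving $x_0$, the touching set $F=E\cap S_\rho(e_0,r)$ is nonempty, and the origin of the tangent space at $e_0$ lies in the convex hull of the unit directions towards the points of $F$. Indeed, otherwise a separating-hyperplane argument would produce a geodesic direction $w$ with $\langle w,v\rangle>0$ for every unit direction $v$ pointing from $e_0$ to a touching point, and then a standard first-variation plus compactness argument (using the geodesic convexity of hyperbolic balls) would move the center so as to strictly decrease the maximal distance to $E$, contradicting minimality. Since at $e_0$ the direction towards a point with spatial part $u$ is simply $u/|u|$, this says $0\in\operatorname{conv}\{u/\sinh r : p\in F\}$. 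By Carath\'eodory's theorem I may then select points $p_1,\dots,p_k\in F$ with $k\le n+1$ and weights $\lambda_i\ge0$, $\sum_i\lambda_i=1$, such that $\sum_i\lambda_i u_i=0$, where $u_i$ is the spatial part of $p_i$. This reduction is the step I expect to be the main obstacle, as it is the only genuinely non-elementary ingredient; everything afterwards is computation.

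For the computation, each $u_i$ satisfies $|u_i|=\sinh r$ and $\langle p_i,p_j\rangle=-\cosh^2 r+u_i\cdot u_j$, so the diameter bound $\rho(p_i,p_j)\le t$ becomes $u_i\cdot u_j\ge\cosh^2 r-\cosh t$. Expanding
\[
0=\Big|\sum_i\lambda_i u_i\Big|^2=\sum_i\lambda_i^2\sinh^2 r+\sum_{i\ne j}\lambda_i\lambda_j\,u_i\cdot u_j,
\]
and using $\lambda_i\lambda_j\ge0$ together with the lower bound on $u_i\cdot u_j$, I get, writing $S=\sum_i\lambda_i^2$ and $\sum_{i\ne j}\lambda_i\lambda_j=1-S$,
\[
0\ \ge\ S\sinh^2 r+(\cosh^2 r-\cosh t)(1-S).
\]
Here $S<1$ (no $\lambda_i$ can equal $1$, since that would force $u_i=0$ while $|u_i|=\sinh r>0$), and by Cauchy--Schwarz $S\ge1/k\ge1/(n+1)$. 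Since $s\mapsto s/(1-s)$ is increasing, rearranging gives $\cosh t-\cosh^2 r\ge \frac{S}{1-S}\sinh^2 r\ge\tfrac1n\sinh^2 r$, that is,
\[
\cosh t\ \ge\ \cosh^2 r+\tfrac1n\sinh^2 r.
\]
Finally, substituting $\cosh^2 r=1+\sinh^2 r$ and $\cosh t-1=2\sinh^2(t/2)$ turns this into $\sinh^2 r\le\frac{2n}{n+1}\sinh^2(t/2)$, i.e. $r\le{\rm arsh}\big(\sqrt{2n/(n+1)}\,\sinh(t/2)\big)=h(n,t)$, as claimed. The inequalities are tight exactly when $k=n+1$, all $\lambda_i=1/(n+1)$, and all $\rho(p_i,p_j)=t$, so the extremal configuration is the regular hyperbolic simplex with edge length $t$, which confirms the sharpness of the bound.
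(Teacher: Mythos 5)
Your proof is correct, but note that the paper itself contains no proof of this statement: Theorem \ref{thm_rjungE} is quoted from B.V. Dekster's paper \cite{de} and used as a black box, so the only comparison available is with the cited literature rather than with an internal argument. Your route --- realize hyperbolic space on the hyperboloid, invoke the first-variation characterization that the center of the minimal enclosing ball has the origin of its tangent space in the convex hull of the unit directions to the touching set $F$, cut down to $k\le n+1$ touching points by Carath\'eodory, and expand $0=\bigl|\sum_i\lambda_i u_i\bigr|^2$ against the bound $u_i\cdot u_j\ge\cosh^2 r-\cosh t$ --- is the standard transplantation of the classical Euclidean proof of Jung's theorem, and every step checks out: $\langle p_i,p_j\rangle=-\cosh^2 r+u_i\cdot u_j$ is the correct Minkowski product for points with $x_0$-coordinate $\cosh r$, so $\rho(p_i,p_j)\le t$ indeed gives $u_i\cdot u_j\ge \cosh^2 r-\cosh t$; the exclusion $S<1$ is properly justified (a weight $\lambda_i=1$ would force $u_i=0$ against $|u_i|=\sinh r>0$); Cauchy--Schwarz gives $S\ge 1/k\ge 1/(n+1)$, hence $S/(1-S)\ge 1/n$; and the final algebra, via $\cosh t-1=2\sinh^2(t/2)$, yields exactly $\sinh^2 r\le \frac{2n}{n+1}\sinh^2(t/2)$, i.e.\ $r\le h(n,t)$. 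The one step you rightly flag as the crux, $0\in\operatorname{conv}\{v_p : p\in F\}$, is standard but when writing it out you should supply the uniformity that compactness gives: if a separating unit vector $w$ satisfied $\langle w,v_p\rangle\ge\delta>0$ on $F$, a limiting argument extends $\langle w,v_p\rangle\ge\delta/2$ to the near-touching set $\{p\in E:\rho(x_0,p)\ge r-\eta\}$ for small $\eta$, and then the first variation of $s\mapsto\rho(\exp_{x_0}(sw),p)$, whose derivative at $s=0$ is $-\langle w,v_p\rangle$, strictly decreases the maximum distance over $E$ for small $s$, contradicting minimality of $r$; the geodesic convexity of hyperbolic balls that you mention is not actually what carries this step. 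Compared with Dekster's treatment, which handles spherical and hyperbolic spaces in parallel, your argument is a clean self-contained proof for the hyperbolic case and has the additional merit of exhibiting the equality configuration (the regular simplex of edge length $t$, with $k=n+1$ and equal weights), which confirms that the bound $h(n,t)$ quoted by the paper is sharp.
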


\begin{remark} Making use of the identity
\begin{align*}
{\rm th}\frac{M}{2}=\frac{{\rm sh}M}{1+\sqrt{1+{\rm sh}^2 M}}\,,\,\, M> 0\,, 
\end{align*}
we observe that
\begin{equation}\label{mysimple}
{\rm th}\frac{h(n,t)}{2}=\frac{u\, {\rm sh}(t/2)}{1+\sqrt{1+ u^2{\rm sh}^2 (t/2)}},\quad u =\sqrt{2n/(n+1)}\,\, , \quad t> 0\,.
\end{equation}
\end{remark}

For Lemma \ref{lem_ineqforhnt}, which provides bounds for the function $h(n,t)$, we first prove some preliminary results.

\begin{proposition}\label{lem_increasingh}
(1) For all $k>0$, the function $h:(0,\infty)\to\R$, $h(x)={\rm sh}(kx)\slash x$, is increasing.

(2) For all $x\geq1$, $k>0$, the inequality $x\,{\rm sh}k\leq{\rm sh}(kx)$ holds.

\end{proposition}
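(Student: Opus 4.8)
The plan is to establish part (1) first, since part (2) follows at once by evaluating the now-monotone quotient at the left endpoint $x=1$. For part (1), I would invoke the monotone form of l'Hôpital's rule, which is already available in the paper's references as \cite[Thm B.2, p. 465]{HKV}. Writing $f(x)={\rm sh}(kx)$ and $g(x)=x$, both functions vanish at $x=0$, and the quotient of their derivatives is $f'(x)/g'(x)=k\,{\rm ch}(kx)$, which is manifestly increasing on $(0,\infty)$. The cited theorem then yields that $h(x)=f(x)/g(x)={\rm sh}(kx)/x$ is increasing, which is exactly the claim.

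Alternatively, and self-containedly, I would differentiate directly. Here
\[
h'(x)=\frac{kx\,{\rm ch}(kx)-{\rm sh}(kx)}{x^2},
\]
so it suffices to show that the numerator $\phi(x):=kx\,{\rm ch}(kx)-{\rm sh}(kx)$ is positive for $x>0$. Since $\phi(0)=0$ and
\[
\phi'(x)=k\,{\rm ch}(kx)+k^2x\,{\rm sh}(kx)-k\,{\rm ch}(kx)=k^2x\,{\rm sh}(kx)>0
\]
for all $x>0$ and $k>0$, the function $\phi$ increases from $\phi(0)=0$ and is therefore positive on $(0,\infty)$. This gives $h'(x)>0$, proving part (1).

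For part (2), I would simply apply the monotonicity of $h$ at $x=1$: for every $x\ge 1$,
\[
{\rm sh}(k)=h(1)\le h(x)=\frac{{\rm sh}(kx)}{x},
\]
and multiplying through by $x>0$ yields $x\,{\rm sh}(k)\le{\rm sh}(kx)$, as desired. There is essentially no deep obstacle in either argument; the only point requiring a little care is checking the hypotheses of the monotone l'Hôpital rule — namely that $f$ and $g$ share the value $0$ at the left endpoint and that $f'/g'$ is monotone — or, equivalently in the direct approach, recognizing that the sign of $h'$ is controlled by the auxiliary function $\phi$, whose positivity is itself an instance of the same ``vanishing at $0$ together with a positive derivative'' mechanism.
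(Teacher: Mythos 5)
Your proposal is correct and your primary argument is exactly the paper's proof: both parts use the monotone l'H\^opital rule \cite[Thm B.2, p.~465]{HKV} with $f(x)={\rm sh}(kx)$, $g(x)=x$, and then evaluate the increasing quotient at $x=1$ to get part~(2). Your self-contained alternative via the auxiliary function $\phi(x)=kx\,{\rm ch}(kx)-{\rm sh}(kx)$ is also valid, but it is only a supplement to the same route the paper takes.
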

\begin{proof}
(1) Writing $f(x)={\rm sh}(kx)$ and $g(x)=x$, we see that $f'(x)\slash g'(x)=k{\rm ch}(kx)$ is increasing and, by \cite[Thm B.2, p. 465]{HKV}, so is $h(x)=f(x)\slash g(x)$. 

(2) Since the function $h(x)={\rm sh}(kx)\slash x$ of part (1) is increasing for all $k>0$,
\begin{align*}
x\geq1
\quad\Leftrightarrow\quad
h(1)\leq h(x)
\quad\Leftrightarrow\quad
{\rm sh}(k)\leq {\rm sh}(kx)\slash x
\quad\Leftrightarrow\quad
x\,{\rm sh}k\leq {\rm sh}(kx).
\end{align*}
\end{proof}

\begin{comment}

\begin{corollary}\label{cor_shkx}
For all $x\geq1$, $k>0$, the inequality $x\,{\rm sh}k\leq{\rm sh}(kx)$ holds.
\end{corollary}
\begin{proof}
Since the function $h(x)={\rm sh}(kx)\slash x$ of Proposition \ref{lem_increasingh} is increasing for all $k>0$,
\begin{align*}
x\geq1
\quad\Leftrightarrow\quad
h(1)\leq h(x)
\quad\Leftrightarrow\quad
{\rm sh}(k)\leq {\rm sh}(kx)\slash x
\quad\Leftrightarrow\quad
x\,{\rm sh}k\leq {\rm sh}(kx).
\end{align*}
\end{proof}
\end{comment}

\begin{lemma}\label{lem_ineqforhnt}
For all $n\geq2$, $t>0$, the inequality $\sqrt{2(n+1)\slash n}\leq t\slash h(n,t)\leq2$ holds.
\end{lemma}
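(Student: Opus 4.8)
The plan is to prove the two inequalities separately, in each case peeling off the transcendental functions by monotonicity and reducing to the elementary comparison supplied by Proposition~\ref{lem_increasingh}(2). Throughout I would write $x=\sqrt{2n/(n+1)}$ and note that for $n\ge 2$ one has $x\ge\sqrt{4/3}=2/\sqrt{3}>1$, which is exactly the hypothesis needed to apply that proposition. Recall also that $h(n,t)={\rm arsh}(x\,{\rm sh}(t/2))$.

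First I would treat the lower bound $\sqrt{2(n+1)/n}\le t/h(n,t)$. Since $h(n,t)>0$, this is equivalent to $h(n,t)\le t\sqrt{n/(2(n+1))}$. Applying the increasing function ${\rm sh}$ to both sides removes the ${\rm arsh}$ from $h(n,t)$ and turns the claim into
\[
x\,{\rm sh}\frac{t}{2}\le {\rm sh}\!\left(\frac{t}{2}\,x\right).
\]
With $k=t/2>0$ this is precisely the inequality $x\,{\rm sh}(k)\le {\rm sh}(kx)$ of Proposition~\ref{lem_increasingh}(2), valid because $x\ge 1$.

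Next I would handle the upper bound $t/h(n,t)\le 2$, which is equivalent to $h(n,t)\ge t/2$. Applying ${\rm sh}$ once more reduces this to $x\,{\rm sh}(t/2)\ge {\rm sh}(t/2)$, i.e. to $x\ge 1$; and $x\ge 1$ is equivalent to $2n\ge n+1$, which holds for every $n\ge 1$. Combining the two bounds completes the proof.

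The argument is short, so there is no serious obstacle; the only points requiring care are the bookkeeping of the equivalence directions when the monotone maps ${\rm sh}$ and ${\rm arsh}$ are applied, and the explicit verification that $x=\sqrt{2n/(n+1)}\ge 1$ for $n\ge 2$, which is precisely what makes Proposition~\ref{lem_increasingh}(2) applicable in the first inequality.
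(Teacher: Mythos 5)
Your proof is correct and follows essentially the same route as the paper's: both reduce each inequality by applying $\mathrm{sh}$ to strip the $\mathrm{arsh}$, invoke Proposition~\ref{lem_increasingh}(2) with $x=\sqrt{2n/(n+1)}\ge 2/\sqrt{3}>1$ and $k=t/2$ for the lower bound, and reduce the upper bound to $\sqrt{2n/(n+1)}\ge 1$. No gaps; your explicit attention to the direction of the equivalences is a minor stylistic improvement over, but not a departure from, the paper's chain of equivalences.
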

\begin{proof}
We can write
\begin{align*}
&\sqrt{\frac{2(n+1)}{n}}\leq t\slash h(n,t)
\quad\Leftrightarrow\quad
h(n,t)={\rm arsh}\left(\sqrt{\frac{2n}{n+1}}{\rm sh}\frac{t}{2}\right)\leq t\sqrt{\frac{n}{2(n+1)}}\\
&\Leftrightarrow\quad
\sqrt{\frac{2n}{n+1}}{\rm sh}\frac{t}{2}\leq {\rm sh}\left(\frac{t}{2}\sqrt{\frac{2n}{n+1}}\right).
\end{align*}
By choosing $x=\sqrt{2n\slash(n+1)}\geq2\slash\sqrt{3}>1$ and $k=t\slash2>0$, we see that the inequality follows from Proposition \ref{lem_increasingh}. Furthermore, since
\begin{align*}
&t\slash h(n,t)\leq2
\quad\Leftrightarrow\quad
h(n,t)={\rm arsh}\left(\sqrt{\frac{2n}{n+1}}{\rm sh}\frac{t}{2}\right)\geq\frac{t}{2}
\quad\Leftrightarrow\quad
\sqrt{\frac{2n}{n+1}}{\rm sh}\frac{t}{2}\geq{\rm sh}\frac{t}{2}\\
&\Leftrightarrow\quad
\sqrt{\frac{2n}{n+1}}\geq1
\quad\Leftrightarrow\quad
n\geq1,
\end{align*}
the latter inequality in the lemma also holds.
\end{proof}

\begin{corollary} \label{JungBd}
(1) If $E$ is a compact subset of the unit ball $\B^n\,, n\ge 2\,,$ with the hyperbolic
diameter at most $t\,,$
then
\[
{\rm cap}(\B^n,E)\leq \frac{ \omega_{n-1}}{\left( \log (1/{\rm th} (h(n,t)/2))\right)^{n-1}}\,,
\quad t \le 2 h(n,t) \le t \sqrt{2n/(n+1)}\,.
\]

(2) If $E$ is a compact subset of a simply connected domain $G\subsetneq\R^2$, then
\begin{align*}
{\rm cap}(G,E)\leq\frac{2\pi}{\log((1+\sqrt{1+v^2})\slash v)}
\quad\text{with}\quad
v=\sqrt{\frac{4}{3}}{\rm sh}\frac{\rho_{G}(E)}{2}.    
\end{align*}
\end{corollary}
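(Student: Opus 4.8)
The plan is to deduce both parts from a single principle: bound the capacity of $E$ by the capacity of the smallest hyperbolic ball containing it, evaluate that capacity explicitly via Lemma~\ref{cgqm_5.14}(2), and then insert Dekster's estimate for the Jung radius from Theorem~\ref{thm_rjungE}.

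For part (1) I would first invoke the definition of the hyperbolic Jung radius to produce a point $z\in\B^n$ with $E\subset\overline{B}_\rho(z,r_{Jung}(E))$. Monotonicity of the condenser capacity under enlargement of the compact plate then gives ${\rm cap}(\B^n,E)\le{\rm cap}(\B^n,\overline{B}_\rho(z,r_{Jung}(E)))$, which is exactly the reasoning recorded in \eqref{ine_caphypJung}. Writing this capacity as the modulus $\M(\Delta(S^{n-1},B_\rho(z,R);\B^n))$ with $R=r_{Jung}(E)$ and applying Lemma~\ref{cgqm_5.14}(2) yields the closed form $\omega_{n-1}\bigl(\log(1/{\rm th}(R/2))\bigr)^{1-n}$. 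Since $R\mapsto\omega_{n-1}\bigl(\log(1/{\rm th}(R/2))\bigr)^{1-n}$ is increasing in $R$ (as $R$ grows, ${\rm th}(R/2)\uparrow 1$, so $\log(1/{\rm th}(R/2))\downarrow 0$ and its negative power $1-n$ grows), the bound $r_{Jung}(E)\le h(n,t)$ from Theorem~\ref{thm_rjungE} upgrades the estimate to the displayed inequality with $h(n,t)$ replacing $R$. The flanking chain $t\le 2h(n,t)\le t\sqrt{2n/(n+1)}$ is just Lemma~\ref{lem_ineqforhnt} after clearing denominators in $\sqrt{2(n+1)/n}\le t/h(n,t)\le 2$.

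For part (2), by conformal invariance of the capacity and of the hyperbolic metric together with the Riemann mapping theorem, I may assume $G=\B^2$ and set $t=\rho_{\B^2}(E)$. Part (1) with $n=2$ and $\omega_1=2\pi$ then gives ${\rm cap}(\B^2,E)\le 2\pi/\log(1/{\rm th}(h(2,t)/2))$, which is precisely \eqref{2dimbd} with Dekster's bound inserted. It remains to rewrite ${\rm th}(h(2,t)/2)$ by means of identity \eqref{mysimple}: with $u=\sqrt{4/3}$ and $v=u\,{\rm sh}(t/2)$ that identity reads ${\rm th}(h(2,t)/2)=v/(1+\sqrt{1+v^2})$, so $1/{\rm th}(h(2,t)/2)=(1+\sqrt{1+v^2})/v$ and the denominator of the bound becomes $\log((1+\sqrt{1+v^2})/v)$, as claimed.

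I expect the only genuinely delicate points to be the two monotonicity facts used in part (1): monotonicity of the condenser capacity when the compact plate is enlarged (immediate from the inclusion of curve families $\Delta(E,\partial\B^n;\B^n)\subset\Delta(E',\partial\B^n;\B^n)$ when $E\subset E'$, together with monotonicity of the modulus), and monotonicity of the explicit ball capacity in the radius $R$ (an elementary calculus check). Both are standard, but they are exactly what keeps the chain of inequalities pointing in the direction needed for an upper bound; the remaining steps are direct substitutions of the earlier lemmas and identities.
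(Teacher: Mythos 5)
Your proposal is correct and takes essentially the same route as the paper: the paper's own proof of part (1) simply cites Theorem \ref{thm_rjungE}, Lemma \ref{lem_ineqforhnt}, Lemma \ref{cgqm_5.14} and ``basic properties of the modulus,'' while part (2) is cited as a combination of \eqref{2dimbd} and the identity \eqref{mysimple} --- exactly the ingredients you assemble. You merely spell out the two monotonicity checks (capacity under enlargement of the compact plate via inclusion of curve families, and monotonicity of the explicit ball capacity in $R$) that the paper leaves implicit, and your derivation of the chain $t\le 2h(n,t)\le t\sqrt{2n/(n+1)}$ from Lemma \ref{lem_ineqforhnt} is accurate.
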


\begin{proof} (1) follows immediately from Theorem \ref{thm_rjungE}, Lemma \ref{lem_ineqforhnt} and Lemma \ref{cgqm_5.14} and some basic
properties of the modulus.

(2) The proof follows from \eqref{2dimbd} and the identity
 \eqref{mysimple}.
\end{proof}

\begin{nonsec}{\bf Proof of Theorem \ref{thm_1.3}.}
{\rm The proof follows from Corollary \ref{JungBd}(2). \hfill $\square$}
\end{nonsec}

\begin{figure}[hbt] %
\centerline{
\scalebox{0.5}{\includegraphics[trim=1cm 8.6cm 1.5cm 8.4cm,clip]{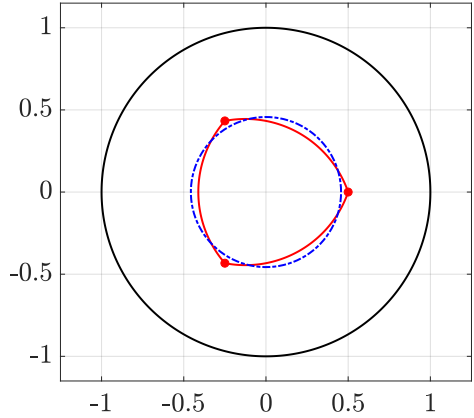}}
}
\caption{A hyperbolic Reuleaux triangle $T$ with vertices on $\{z:  |z|=0.5\}$
and a hyperbolic disk $D$ with the hyperbolic diameter equal to that of $T$.
Note that ${\rm cap}(\B^2, D) <{\rm cap}(\B^2, T) \,.$}
\label{fig:eqHypDia}
\end{figure}

\begin{remark}\label{compres}

We compare here the  capacities of several sets in terms of the hyperbolic diameter $t.$ The results are parametrized so that the vertices of the Reuleaux triangle
are on the circle $|z|=r\,.$ The results are given in the following table organized in seven columns
as follows: (1) $r=\tnh (t/2)$, (2) $t$, (3) $2 \pi/\mu(\tnh(t/2))$ i.e.
the capacity of the hyperbolic
geodesic segment of diameter $t$,
(4)  the capacity of a Euclidean Reuleaux triangle with hyperbolic diameter $t$,
(5)~$2 \pi/ \log(1/\tnh(t/4))$ i.e. the capacity of a hyperbolic
disk with diameter $t$, (6) the capacity of a hyperbolic Reuleaux triangle with diameter $t$,
(7) the upper bound given by Corollary \ref{JungBd}.
The values in columns (4) and (6) are computed using the method described in  Appendix~\ref{sec:num-cap} with $\alpha=0.4+0.6r$, $z_2=0$, and $n=3\times2^{8}$.

% Computed with hyp_Reuleaux_triangle_v6.m on 2020-10-06
\begin{table}[hbt]
\caption{The values of the computational results in Remark \ref{compres}.}
\label{tab:capac}%
\begin{tabular}{l@{\hspace{0.25cm}}|@{\hspace{0.25cm}}l@{\hspace{0.25cm}}|@{\hspace{0.25cm}}l@{\hspace{0.25cm}}|@{\hspace{0.25cm}}l@{\hspace{0.25cm}}|@{\hspace{0.25cm}}l@{\hspace{0.25cm}}|@{\hspace{0.25cm}}l@{\hspace{0.25cm}}|@{\hspace{0.25cm}}l} \hline
$r$    & h-diam   &  capSeg  &  capERtri &  capDisk  &  capHRtri &  capJung  \\ \hline
$0.05$ & $0.1734$ & $1.6396$ & $2.0242$  & $2.0017$  & $2.0245$  & $2.0974$ \\
$0.15$ & $0.5255$ & $2.3028$ & $3.1332$  & $3.0869$  & $3.1397$  & $3.3120$ \\
$0.25$ & $0.8937$ & $2.8457$ & $4.2040$  & $4.1470$  & $4.2360$  & $4.5324$ \\ \hline
$0.35$ & $1.2903$ & $3.3831$ & $5.4255$  & $5.3920$  & $5.5272$  & $5.9850$ \\
$0.45$ & $1.7305$ & $3.9583$ & $6.9289$  & $6.9994$  & $7.1957$  & $7.8687$ \\
$0.55$ & $2.2359$ & $4.6082$ & $8.8968$  & $9.2558$  & $9.5369$  & $10.5099$ \\
\hline
$0.65$ & $2.8416$ & $5.3821$ & $11.6482$ & $12.7508$ & $13.1588$ & $14.5855$ \\
$0.75$ & $3.6173$ & $6.3706$ & $15.8319$ & $18.9982$ & $19.6196$ & $21.8407$ \\
$0.85$ & $4.7413$ & $7.8018$ & $23.0155$ & $33.5301$ & $34.5948$ & $38.6613$ \\
$0.95$ & $7.0399$ & $10.7285$& $38.0667$ & $106.0995$& $108.9365$& $122.4953$ \\
\hline
\end{tabular}
\end{table}

\end{remark}

%\newpage
\begin{nonsec} {\bf Comparison of the bounds.} {\rm Corollary \ref{JungBd}(2)
gives an upper bound $b_2(t)$ for
\[
b(t) \equiv \sup \{  {\rm cap}(\B^2,E) \,: E\,\, {\rm connected\,\,and}\,\, \rho(E)=t\}\,.
\]
As we have seen above,
\[
b(t)  \ge 2\pi /\log(1/{\rm th}(t/4))\equiv b_1(t)\,.
\]
Define $b_2(t)$ as in Corollary \ref{JungBd}(2). Now, we know that for a hyperbolic Reuleaux triangle $T$ of hyperbolic
diameter equal to $t$ we have
\begin{equation} \label{Rbds}
b_1(t) \le {\rm cap}(\B^2,T)\le b_2(t)\,.
\end{equation}
Figure \ref{fig:cmpRtri} displays the graph of the function $b_2(t)/b_1(t)$
and its limit value $2/\sqrt{3}$ when $t \to \infty\,.$
}
\end{nonsec}

\begin{figure}[hbt] %
\centerline{
\scalebox{0.5}{\includegraphics[trim=1cm 8.6cm 1.5cm 8.5cm,clip]{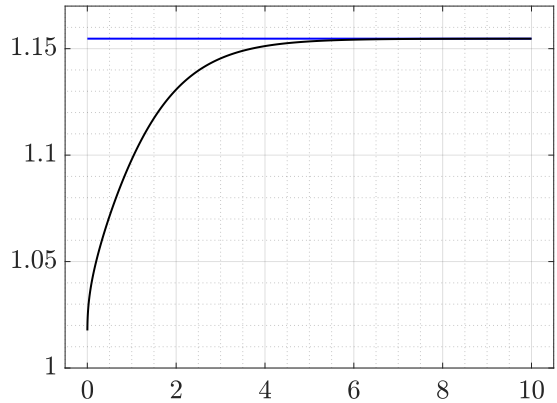}}
}
\caption{Graph of the quotient $b_2(t)/b_1(t)$ and $2/\sqrt{3}\,.$}
\label{fig:cmpRtri}
\end{figure}

\begin{nonsec}{\bf Capacity comparison:  the Euclidean vs  hyperbolic  Reuleaux triangle.} {\rm
We have shown above that the hyperbolic  Reuleaux triangle of diameter $t$ has a larger capacity
than $b_1(t)\,,$ the capacity of a hyperbolic disk with the same diameter.
A natural question is: Why do we use for this purpose the hyperbolic  Reuleaux triangle, not the
Euclidean one? It follows easily from Lemma \ref{lem_rhoball} that, as a point set,
the hyperbolic triangle contains the Euclidean one and thus has a larger capacity. The key point
now is that the capacity of the Euclidean Reuleaux triangle is smaller than $b_1(t)\,$
for $t >2\,.$
In Figure \ref{fig:hypVsEuc} (left) we demonstrate this fact by graphing, as a function of $t\,,$ the four quotients (1) Jung bound Corollary \ref{JungBd}(2) divided by  $b_1(t)$, (2) the capacity of the hyperbolic Reuleaux triangle/$b_1(t)$, (3)
$b_1(t)/b_1(t)$ (horizontal line), (4) the capacity of the Euclidean Reuleaux triangle/$b_1(t)\,.$
 
Figure \ref{fig:hypVsEuc} (right) displays three sets of equal hyperbolic diameter:
a disk, a hyperbolic Reuleaux triangle (solid line) and a Euclidean Reuleaux triangle
(dashed line).
}
\end{nonsec}

\begin{figure}[hbt]
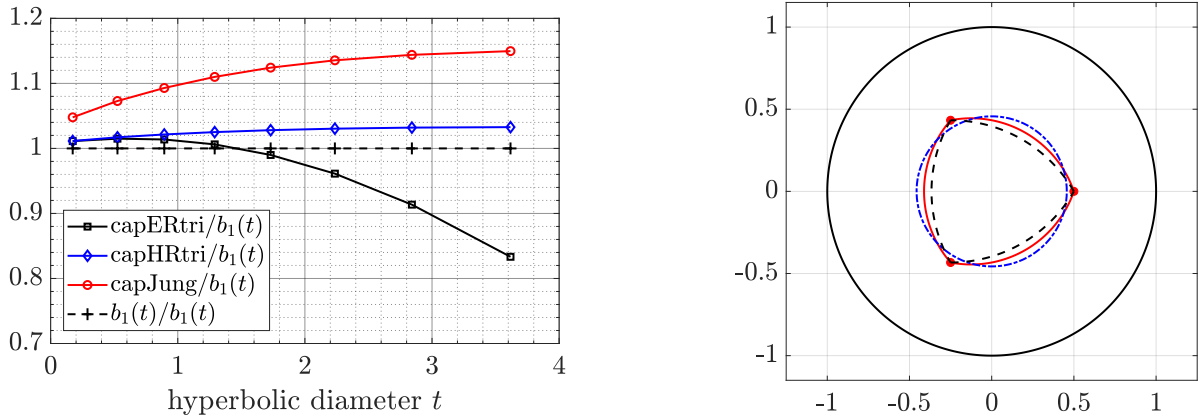
 %
\centerline{
\scalebox{0.5}{\includegraphics[trim=3cm 8.5cm 3cm 8.5cm,clip]{hypVsEuc_2.pdf}}
\hfill
\scalebox{0.5}{\includegraphics[trim=3cm 8.5cm 3cm 8.4cm,clip]{EuHRtri_2.pdf}}
}
\caption{On the left, the capacities divided by $b_1(t)\,$ (see \eqref{Rbds}) as a function of the hyperbolic diameter $t$. On the right, three sets of equal hyperbolic diameter:
  a disk $D$ (dash-dotted line), a hyperbolic Reuleaux triangle $T$ (solid line), and a Euclidean Reuleaux triangle  $E$ (dashed line), $E \subset T\,.$
Note that by the results in
Table \ref{tab:capac}  we have ${\rm cap}(\B^2, E) <{\rm cap}(\B^2, D) <{\rm cap}(\B^2, T) \,$ for all large enough $t\,.$ 
\label{fig:hypVsEuc}}
\end{figure}

\begin{remark}\label{rmk_1guess}
For the Lebesgue measure of a measurable set $E\subset\R^n$, the well-known isodiametric
inequality states that $m(E)\le m(B^n(0,r))$ where the Euclidean diameter of $E$ is $2r$ \cite[p.548, Thm C.10]{Leo}. A similar result was proven very recently by K.J. B\"or\"oczky and \'A. Sagmeister in \cite{BS} for the balls in the hyperbolic geometry. As the above computational results demonstrate, for the condenser capacity there is no similar result.
\end{remark}

\begin{nonsec}{\bf Proof of Theorem \ref{qcHypDia}.}
{\rm 
Due to the conformal invariance of the hyperbolic metric, we may assume without loss of generality that $G_1=G_2=\B^2$. Let $\Delta=\Delta(E,\partial\B^2;\B^2)$ be the family of all curves in $\B^2$ joining $E$ and $\partial\B^2$. By quasiconformality,
\begin{align}\label{ine_InhypDiaProof1}
\M(f\Delta)\leq K\M(\Delta).    
\end{align}
Next, because $\mu(t)<\log(4\slash r)$ by \cite[(7.21)]{HKV} for $r\in(0,1)$, we obtain by Lemma \ref{lem_capgamma}(2) and \eqref{capGro} that
\begin{align}\label{ine_InhypDiaProof2}
\M(f\Delta)
\geq\gamma_2\left(\frac{1}{\text{th}(\rho_{G_2}(f(E))\slash2)}\right)
\geq\frac{2\pi}{\mu(\text{th}(\rho_{G_2}(f(E))\slash2))}
\geq\frac{2\pi}{\log(4\slash(\text{th}(\rho_{G_2}(f(E))\slash2)))}.
\end{align}
On the other hand, by Corollary \ref{JungBd},
\begin{align}\label{ine_InhypDiaProof3}
\M(f\Delta)
\leq\frac{2\pi}{\log(1\slash(\text{th}(h(2,\rho_{G_1}(E))\slash2)))}.
\end{align}
The inequalities \eqref{ine_InhypDiaProof1}, \eqref{ine_InhypDiaProof2} and \eqref{ine_InhypDiaProof3} together yield
\begin{align*}
\text{th}\frac{\rho_{G_2}(f(E))}{2}
\leq4\left(\text{th}\frac{h(2,\rho_{G_1}(E))}{2}\right)^{1\slash K},
\end{align*}
as desired.
\hfill $\square$}
\end{nonsec}

%%%%%%%%%%%%%%%%%%%%%%%%%%%%%%%%%%%%%%%%%
%%%%%%%%%%%%%%%%%%%%%%%%%%%%%%%%%%%%%%%%%
%%%%%%%%%%%%%%%%%%%%%%%%%%%%%%%%%%%%%%%%%
\section{Upper bounds for the hyperbolic Jung radius}
%%%%%%%%%%%%%%%%%%%%%%%%%%%%%%%%%%%%%%%%%
%%%%%%%%%%%%%%%%%%%%%%%%%%%%%%%%%%%%%%%%%
%%%%%%%%%%%%%%%%%%%%%%%%%%%%%%%%%%%%%%%%%

In  view of Corollary \ref{JungBd}, it is natural to look for bounds of the hyperbolic Jung radius of a compact set in a simply connected plane domain $G$. Perhaps a first question to study is whether we can find an upper bound in terms of the domain functional $d(E)\slash d(E,\partial G)$.  As Example \ref{ex51} demonstrates, this is not true in general simply connected domains, but by \eqref{jungPhi}
such a majorant is valid for $\varphi$-uniform domains.

\begin{example} \label{ex51}
For $G=\B^2\backslash[0,1)$, let $t\in(0,1\slash4)$, fix the points $x_t=(1\slash2,t)$, $y_t=(1\slash2,-t)$, and let $E_t$ be the set $\{x_t,y_t\}\subset G$. Then $d(E_t)\slash d(E_t,\partial G)=2$ but $\rho_G(x_t,y_t)\to\infty$ if $t\to0^+$. Therefore,  the hyperbolic Jung radius has no bounds in terms of $d(E)\slash d(E,\partial G)$.
\end{example}

\nonsec{\rm {\bf $\varphi$-uniform domains.}}
Let $\varphi:[0,\infty)\to[0,\infty)$ be an increasing homeomorphism and $G\subset\R^2$ a simply connected domain. We say that $G$ is $\varphi$-uniform if
\begin{align}
\rho_G(x,y)\leq\varphi\left(\frac{|x-y|}{\min\{d_G(x),d_G(y)\}}\right)    
\end{align}
for all $x,y\in G$. 

The class of $\varphi$-uniform domains  \cite[pp. 84-85]{HKV} contains many types of domains, including, for instance, all convex domains and so called quasidisks, which are images of the unit disk under quasiconformal maps of the plane \cite{gh}.

Now, we observe that if $E$ is a compact subset of a simply connected $\varphi$-uniform domain $G$, then by Theorem \ref{thm_rjungE},
\begin{align} \label{jungPhi}
r_{Jung}(E)\leq{\rm arsh}\left(\frac{2}{\sqrt{3}}{\rm sh}\frac{\varphi(d(E)\slash d(E,\partial G))}{2}\right).    
\end{align}  

Finally, we give a simple sufficient condition for a domain $G\subset\R^2$ to be $\varphi$-uniform: There exists $c\geq1$ such that every pair of points $x,y$ in $G$ can be joined by a curve $\gamma$ with length at most $c|x-y|$ so that
\begin{align*}
d(\gamma,\partial G)\geq(1\slash c)\min\{d_G(x),d_G(y)\}.   
\end{align*}
For more details, see \cite[p. 35]{gh} and \cite[pp. 84-85]{HKV}. 

\begin{rem} Recall that in every plane domain $G\,,$ the hyperbolic diameter of a continuum 
$E\subset G$ is bounded in terms of $d(E)/d(E,\partial G)$ \cite[6.32]{HKV} and hence so is its hyperbolic
Jung radius by Theorem \ref{thm_rjungE}.
\end{rem}

%\begin{figure}[hbt] %
%\centerline{
%\scalebox{0.6}{\includegraphics[trim=3cm 8.5cm 3cm 9cm,clip]{nonconvex}}
%}
%\caption{A nonconvex domain and a nonconvex compact subset.}
%\label{fig:nonconvex}
%\end{figure}

%%%%%%%%%%%%%%%%%%%%%%%%%%%%%%%%%%%%%%%%%
%%%%%%%%%%%%%%%%%%%%%%%%%%%%%%%%%%%%%%%%%
%%%%%%%%%%%%%%%%%%%%%%%%%%%%%%%%%%%%%%%%%
\appendix
\section{Computational methods}
\label{sec:num}
%%%%%%%%%%%%%%%%%%%%%%%%%%%%%%%%%%%%%%%%%
%%%%%%%%%%%%%%%%%%%%%%%%%%%%%%%%%%%%%%%%%
%%%%%%%%%%%%%%%%%%%%%%%%%%%%%%%%%%%%%%%%%

Computational tools for computing several conformal invariants in simply and doubly connected domains have been presented recently in~\cite{nvs,nv}. 
These tools are based on using the boundary integral equation with the generalized Neumann kernel. A fast numerical method for solving the integral equation is presented in~\cite{Nas-ETNA} which makes use of the Fast Multipole Method toolbox~\cite{Gre-Gim12}. 
In this appendix, we briefly describe these tools and demonstrate how they can be applied to compute numerically the hyperbolic diameter of compact sets as well as the capacity of condensers. 

\nonsec{\bf Numerical computation of hyperbolic diameter.}\label{sec:num-dia} 
%\begin{comment}
%The hyperbolic distance $\rho_{\D}(x,y)$ between any two points $x,y\in\D$ can be computed by \cite[(2.8) p. 15]{BM}
%\[
%\text{sh}^2\frac{\rho_{\D}(x,y)}{2}=\frac{|x-y|^2}{(1-|x|^2)(1-|y|^2)}.
%\]
%If $E\subset\D$, then the hyperbolic diameter of $E$ is defined by
%\[
%\mbox{h-diam}(E) = \sup\{\rho_{\D}(x,y)\,|\, x,y\in E\}.
%\]
%If $E$ is a compact set, then [do we need a ref]
%\begin{equation}\label{eq:diam-max-D}
%\mbox{h-diam}(E) = \max\{\rho_{\D}(x,y)\,|\, x,y\in \partial E\}.
%\end{equation}
%
%For a simply connected Jordan domain $G$ in the plane, the hyperbolic distance $\rho_{G}(x,y)$ between any two points $x,y\in G$ can be defined in terms of the conformal Riemann mapping function $f\,:\,G\to\D=f(G)$ by 
%\[
%\rho_{G}(x,y) = \rho_{\D}(f(x),f(y)).
%\]
%\end{comment}
If $E\subset G$ is a compact set in a simply connected domain $G$, then the hyperbolic diameter of $E$ with respect to $G$, 
\begin{equation}\label{eq:diam-max-G}
\rho_G(E) = \max\{\rho_{G}(x,y)\,|\, x,y\in \partial E\},
\end{equation}
can be computed once we have an algorithm for the hyperbolic distance $\rho_G(x,y)\,.$
In this paper, the maximum in~\eqref{eq:diam-max-G} is approximated numerically by discretizing the boundary $\partial E$ with a sufficiently large number of points. 
The hyperbolic distance $\rho_{G}(x,y)$ itself is approximated numerically by approximating a conformal mapping from the domain $G$ onto the unit disk $\B^2$.

A MATLAB function \verb|hypdist| for approximating the hyperbolic distance $\rho_{G}(x,y)$ when the boundary $\Gamma=\partial G$ is a piecewise smooth Jordan curve is presented in~\cite{nvs}. 
To use the function \verb|hypdist|, we parametrize $\Gamma$, which is assumed to be oriented counterclockwise, by a $2\pi$-periodic complex function $\eta(\delta(t))$, $t\in[0,2\pi]$, where $\delta\,:\,[0,2\pi]\to[0,2\pi]$ is a bijective strictly monotonically increasing function. When $\Gamma$ is smooth, we choose $\delta(t)=1$. For piecewise smooth boundary $\Gamma$, the function $\delta$ is chosen as described in~\cite[p.~697]{LSN17} (see also \cite{kre90}). We define $n$ equidistant nodes $s_1, \ldots, s_n$ in the interval $[0,2\pi]$ by
\begin{equation}\label{eq:s_i}
s_k = (k-1) \frac{2 \pi}{n}, \quad k = 1, \ldots, n,
\end{equation}
where $n$ is an even integer. Then we compute the vectors \texttt{et} and \texttt{etp} by
\[
\texttt{et}  = \eta(\delta(\bs))\in\C^{n}, \qquad
\texttt{etp} = \eta'(\delta(\bs))\delta'(\bs)\in\C^{n},  
\]
where $\bs=[s_1,\ldots, s_n]\in\R^n$. We also discretize the boundary of $E$ by a vector of points \verb|z|.
Then, the hyperbolic diameter $\rho_G(E)$ is approximated by calling
\begin{verbatim}
 max(max(hypdist(et,etp,n,alpha,z,z))),
\end{verbatim}
where $\alpha$ is  an auxiliary point in $G$.
For more details, we refer the reader to~\cite{nvs}.

\nonsec{\rm {\bf Numerical computation  of the capacity.}} \label{sec:num-cap}
Consider a bounded simply connected domain $G$ in the plane and a compact set $E\subset G$ such that $D= G\setminus E$ is a doubly connected domain. 
In this paper, the capacity of the condenser $(G,E)$ will be computed by the MATLAB function \verb|annq| from~\cite{nv}. 
The boundary components of $D= G\setminus E$ are assumed to be piecewise smooth Jordan curves.
Let $\Gamma_1$ be the external boundary component and $\Gamma_2$ be the inner boundary component such that $\Gamma_1$ is oriented counterclockwise and $\Gamma_2$ is oriented clockwise.
We parametrize $\Gamma_j$ by a $2\pi$-periodic complex function $\eta_j(\delta_j(t))$, $t\in[0,2\pi]$, where $\delta_j\,:\,[0,2\pi]\to[0,2\pi]$ is a bijective strictly monotonically increasing function,
$j=1,2$. When $\Gamma_j$ is smooth, we choose $\delta_j(t)=1$. For piecewise smooth boundary component $\Gamma_j$, the function $\delta_j$ is chosen as in~\cite[p.~697]{LSN17}. We compute the vectors \texttt{et} and \texttt{etp} by
\begin{eqnarray*}
\texttt{et} &=& [\eta_1(\delta_1(\bs))\,,\,\eta_2(\delta_2(\bs))]\in\C^{2n}, \\
\texttt{etp} &=& [\eta_1'(\delta_1(\bs))\delta_1'(\bs)\,,\,\eta_2'(\delta_2(\bs))\delta_2'(\bs)]\in\C^{2n},  
\end{eqnarray*}
where $\bs=[s_1,\ldots, s_n]\in\R^n$ and $s_1, \ldots, s_n$ are given by~\eqref{eq:s_i}. Then the MATLAB function \verb|annq| can be used to approximate $\capa(G,E)$ as follows,
\begin{verbatim}
  [~,cap] = annq(et,etp,n,alpha,z2,'b'),
\end{verbatim}
where $\alpha$ is an auxiliary point in the domain $D$ and $z_2$ is an auxiliary point in the interior of $E$ (see Figure~\ref{fig:hyb-cir} (right)).
The readers are referred to~\cite{nv} for more details.

The values of the parameters in the functions \verb|hypdist| and \verb|annq| are chosen as in~\cite{nvs,nv}.  
The codes for all presented computations in this paper are available in the link \url{https://github.com/mmsnasser/hypdiam}.

%%%%%%%%%%%%%%%%%%
%%%%%%%%%%%%%%%%%%
%%%%%%%%%%%%%%%%%%
%  hypdiabiblio.tex
%%%%%%%%%%%%%%%%%%
%%%%%%%%%%%%%%%%%%
%%%%%%%%%%%%%%%%%%
\def\cprime{$'$} \def\cprime{$'$} \def\cprime{$'$}
\providecommand{\bysame}{\leavevmode\hbox to3em{\hrulefill}\thinspace}
\providecommand{\MR}{\relax\ifhmode\unskip\space\fi MR }
% \MRhref is called by the amsart/book/proc definition of \MR.
\providecommand{\MRhref}[2]{%
  \href{http://www.ams.org/mathscinet-getitem?mr=#1}{#2}
}
\providecommand{\href}[2]{#2}


\begin{thebibliography}{33}


%%%%%%%%%%%%%%%%%%%%%%%%%%

%\bibitem{ah}{\small \textsc{L.V.
%Ahlfors, }  \emph{Conformal invariants.} McGraw-Hill, New York, 1973.}

%%%%%%%%%%%%%%%%%%%%%%%%%%

%\bibitem{avv}{\small \textsc{G.D. Anderson, M. K. Vamanamurthy, and
%M. Vuorinen, }  \emph{Conformal invariants, inequalities and quasiconformal maps.} J.
%Wiley, 1997.}

%%%%%%%%%%%%%%%%%%%%%%%%%%

\bibitem{bae}{\small \textsc{A. Baernstein, }
 \emph{Symmetrization in analysis.}
With David Drasin and Richard S. Laugesen. With a foreword by Walter Hayman. New Mathematical Monographs, 36. Cambridge University Press, Cambridge, 2019. xviii+473 pp.}

%%%%%%%%%%%%%%%%%%%%%%%%%

%\bibitem{be}{\small \textsc{  A.F. Beardon, }
 %\emph{The geometry of discrete groups.} Graduate texts in Math., Vol. 91, Springer-Verlag, New %York, 1983.
%}

%%%%%%%%%%%%%%%%%%%%%%%%%%

\bibitem{BM}{\small \textsc{A.F. Beardon and D. Minda,}  
The hyperbolic metric and geometric function theory, Proc.   International Workshop on  Quasiconformal Mappings and their Applications (IWQCMA05), eds. S. Ponnusamy, T. Sugawa and M. Vuorinen (2006), 9-56.}

%%%%%%%%%%%%%%%%%%%%%%%%%%

\bibitem{BS}{\small \textsc{K.J. B\"or\"oczky and \'A. Sagmeister,} The isodiametric problem on the sphere and in the hyperbolic space. Acta Math. Hungar. 160 (2020), no. 1, 13--32.}

%%%%%%%%%%%%%%%%%%%%%%%%%%

\bibitem{de}{\small \textsc{B.V.
Dekster, } The Jung theorem for spherical and hyperbolic spaces. Acta Math. Hungar. 67 (1995), no. 4, 315--331. }

%%%%%%%%%%%%%%%%%%%%%%%%%%

%\bibitem{dt}{\small \textsc{T.A. Driscoll and L.N. Trefethen, }  \emph{Schwarz-Christoffel mapping.} %Cambridge Monographs on Applied and Computational Mathematics, 8. Cambridge University %Press, Cambridge, 2002. xvi+132 pp.}

%%%%%%%%%%%%%%%%%%%%%%%%%%

\bibitem{du}{\small \textsc{V.N. Dubinin, }  \emph{Condenser Capacities and Symmetrization in Geometric Function Theory,} Birkh\"auser, 2014.}

%%%%%%%%%%%%%%%%%%%%%%%%%%

%\bibitem{garmar}{\small \textsc{J.B. Garnett and D.E. Marshall, }
 %\emph{Harmonic measure.} Reprint of the 2005 original. New Mathematical Monographs,
%2. Cambridge University Press, Cambridge, 2008. xvi+571 pp.}
%%%%%%%%%%%%%%%%%%%%%%%%%%

\bibitem{gh}{\small \textsc{
F. W. Gehring and K. Hag,}
\emph{The ubiquitous quasidisk}, vol. 184 of Mathematical Surveys and Monographs. American Mathematical Society, Providence, RI, 2012. With contributions by Ole Jacob Broch.}



%%%%%%%%%%%%%%%%%%%%%%%%%%
\bibitem{GMP}{\small \textsc{F.W.
Gehring, G.J. Martin, and B.P. Palka,}   \emph{ An introduction to the theory of higher-dimensional quasiconformal mappings.} Mathematical Surveys and Monographs, 216. American Mathematical Society, Providence, RI, 2017. ix+430 pp.
}

%%%%%%%%%%%%%%%%%%%%%%%%%%

\bibitem{Gre-Gim12}{\small \textsc{L. Greengard and Z. Gimbutas,} FMMLIB2D: A {MATLAB} toolbox for fast multipole method in two dimensions, version 1.2. 2019, 
\url{www.cims.nyu.edu/cmcl/fmm2dlib/fmm2dlib.html}. Accessed 6 Nov 2020.}


%%%%%%%%%%%%%%%%%%%%%%%%%%%%%%%

\bibitem{HKV}
{\small \textsc{P.~Hariri, R.~Kl\'en, and M.~Vuorinen,}  \emph{Conformally {I}nvariant {M}etrics  and {Q}uasiconformal {M}appings}, Springer Monographs in Mathematics, Springer, Berlin, 2020.}

%%%%%%%%%%%%%%%%%%%%%%%%%%%%%%%

\bibitem{hkm}
{\small \textsc{J.
Heinonen, T. Kilpel\"ainen, and O. Martio,} \emph{ Nonlinear potential theory of degenerate elliptic equations}, Oxford Mathematical Monographs. Oxford Science Publications. The Clarendon Press, Oxford University Press, New York, 1993. 
Revised and extended edition
Dover 2006. }

%%%%%%%%%%%%%%%%%%%%%%%%%%%%%%

%\bibitem{ks}{\small \textsc{B. Kawohl and G. Sweers, }  On a formula for sets of constant %width in 2D. Commun. Pure Appl. Anal. 18 (2019), no. 4, 2117--2131.}

%%%%%%%%%%%%%%%%%%%%%%%%%%%%%%%
 
%\bibitem{kela}{\small \textsc{L. Keen and N. Lakic, }  \emph{
%Hyperbolic geometry from a local viewpoint.}
%London Mathematical Society Student Texts, 68. Cambridge University Press, Cambridge, 2007. %x+271 pp}

%%%%%%%%%%%%%%%%%%%%%%%%%%
\bibitem{kre90}{\small \textsc{R. Kress,} 
A Nystr\"om method for boundary integral equations in domains with corners.
Numer. Math. 58(2) (1990), 145--161.}


%%%%%%%%%%%%%%%%%%%%%%%%%%
%\bibitem{KU}{\small \textsc{R. K\"uhnau,}  \emph{Geometrie der konformen
%Abbildung auf der hyperbolischen und der elliptischen Ebene.} VEB Deutscher %Verlag der Wissenschaften, Berlin 1974.}

%%%%%%%%%%%%%%%%%%%%%%%%%%

\bibitem{Leo}{\small \textsc{G. Leoni,}  \emph{A first course in Sobolev spaces.} Graduate Studies in Mathematics, 105. American Mathematical Society, Providence, RI, 2009. xvi+607 pp.}

%%%%%%%%%%%%%%%%%%%%%%%%%%

\bibitem{LSN17} {\small  \textsc{J. Liesen, O. S\'ete and M.M.S. Nasser,} Fast and 
accurate computation of the logarithmic capacity of compact sets. Comput. Methods 
Funct. Theory  17 (2017), 689--713.}
%%%%%%%%%%%%%%%%%%%%%%%%%%

\bibitem{mmo}{\small \textsc{H. Martini, L. Montejano, and D. Oliveros, }  \emph{Bodies of constant width. An introduction to convex geometry with applications.} Birkh\"auser/Springer, 
Cham, 2019. xi+486 pp.}



%%%%%%%%%%%%%%%%%%%%%%%%%%

\bibitem{m}{\small \textsc{V.
Maz\'{}ya,} \emph{Sobolev spaces with applications to elliptic partial differential equations.} Second, revised and augmented edition. Grundlehren der Mathematischen Wissenschaften [Fundamental Principles of Mathematical Sciences], 342. Springer, Heidelberg, 2011. xxviii+866 pp.}

%%%%%%%%%%%%%%%%%%%%%%%%%%

%\bibitem{Nas-Siam1}{\small \textsc{M.M.S. Nasser, } Numerical conformal mapping via a boundary integral equation with the generalized Neumann kernel. SIAM J. Sci. Comput. 31 (2009), 1695--1715.}

%%%%%%%%%%%%%%%%%%%%%%%%%%

\bibitem{Nas-ETNA}{\small  \textsc{M.M.S. Nasser, } Fast solution of boundary integral equations with the generalized Neumann kernel. Electron. Trans. Numer. Anal. 44 (2015), 189--229.}

%%%%%%%%%%%%%%%%%%%%%%%%%%

%\bibitem{Nas-cmft15}{\small  \textsc{M.M.S. Nasser, } Fast computation of the circular map. Comput. Methods Funct. Theory 15 (2015) 187--223.}

%%%%%%%%%%%%%%%%%%%%%%%%%%

\bibitem{cchp} {\small  \textsc{M.M.S. Nasser, O. Rainio and M.~Vuorinen:} Condenser capacity and hyperbolic perimeter. 
Comput. Math. Appl. (to appear).}

%%%%%%%%%%%%%%%%%%%%%%%%%%

\bibitem{nvs} {\small  \textsc{M.M.S. Nasser and M.~Vuorinen:} Conformal invariants in simply connected domains. 
Comput. Methods Funct. Theory 20 (2020), 747--775.}


%%%%%%%%%%%%%%%%%%%%%%%%%%

\bibitem{nv} {\small  \textsc{M.M.S. Nasser  and M.~Vuorinen:} Computation of conformal invariants. Appl. Math. Comput. 389 (2021), 125617.}

%%%%%%%%%%%%%%%%%%%%%%%%%%

\bibitem{ps}{\small \textsc{G. P\'{o}lya  and G. Szeg\"{o}, }  \emph{Isoperimetric Inequalities in Mathematical Physics.} Annals of Mathematics Studies, no. 27, Princeton University Press, Princeton, N. J., 1951. xvi+279 pp}

%%%%%%%%%%%%%%%%%%%%%%%%%%%%%%%
\bibitem{res}{\small \textsc{ Yu. G.
Reshetnyak, }  \emph{Space mappings with bounded distortion.} Translated from the Russian by H. H. McFaden. Translations of Mathematical Monographs, 73. American Mathematical Society, Providence, RI, 1989. xvi+362 pp.}

%%%%%%%%%%%%%%%%%%%%%%%%%%%%%%%
\bibitem{rick}{\small \textsc{S. Rickman, }
 \emph{Quasiregular mappings.}
Ergebnisse der Mathematik und ihrer Grenzgebiete (3) [Results in Mathematics and Related Areas (3)], 26. Springer-Verlag, Berlin, 1993. x+213 pp.}

%%%%%%%%%%%%%%%%%%%%%%%%%%%%%%%

%\bibitem{sar}{\small \textsc{J. Sarvas, }
%Symmetrization of condensers in n-space.
%Ann. Acad. Sci. Fenn. Ser. A. I. 1972, no. 522, 44 pp. }

%%%%%%%%%%%%%%%%%%%%%%%%%%

%\bibitem{so1}{\small \textsc{A. Yu. Solynin, } Solution of the P\'{o}lya-Szeg\"{o} isoperimetric %problem. (Russian) Zap. Nauchn. Sem. Leningrad. Otdel. Mat. Inst. Steklov. (LOMI) 168 (1988), Anal. %Teor. Chisel i Teor. Funktsii. 9, 140--153, 190; translation in J. Soviet Math. 53 (1991), no. 3, %311--320.}

%%%%%%%%%%%%%%%%%%%%%%%%%%

%\bibitem{so2}{\small \textsc{A. Yu. Solynin, } Some extremal problems on the hyperbolic polygons. %Complex Variables Theory Appl. 36 (1998), no. 3, 207--231.}

%%%%%%%%%%%%%%%%%%%%%%%%%%

%\bibitem{soz} {\small \textsc{A. Yu. Solynin and V.A. Zalgaller, } An isoperimetric inequality for l%ogarithmic capacity of polygons. Ann. of Math. (2) 159 (2004), no. 1, 277--303.}

%%%%%%%%%%%%%%%%%%%%%%%%%%

%\bibitem{vu85} {\small \textsc{M. Vuorinen, }  Conformal invariants and quasiregular mappings. J. %Anal. Math. 45 (1985), 69--115.}

%%%%%%%%%%%%%%%%%%%%%%%%%%%

%\bibitem{vu88} {\small \textsc{M. Vuorinen, }   \emph{Conformal geometry and quasiregular %mappings.} Lecture Notes in Mathematics, 1319. Springer-Verlag, Berlin, 1988. xx+209 pp.}

\end{thebibliography}
\end{document}